\documentclass[11pt,a4paper]{article}

\usepackage{amssymb,amsmath,amsthm}
\usepackage[all]{xypic}
\usepackage{mathptmx}

\sloppy

\headheight=5mm
\topmargin=0mm
\oddsidemargin=4.6mm
\textheight=210mm
\textwidth=150mm

\newtheorem{thm}{Theorem}[section]
\newtheorem{prop}[thm]{Proposition}
\newtheorem{lem}[thm]{Lemma}
\newtheorem{cor}[thm]{Corollary}

\theoremstyle{definition}
\newtheorem{dfn}[thm]{Definition}
\newtheorem{rem}[thm]{Remark}
\newtheorem{prob}[thm]{Problem}
\renewcommand{\proofname}{{\rm\textbf{Proof.}}}

\def\dim{\mathop{\mathrm{dim}}\nolimits}
\def\Im{\mathop{\mathrm{Im}}\nolimits}
\def\Ker{\mathop{\mathrm{Ker}}\nolimits}
\def\Hom{\mathop{\mathrm{Hom}}\nolimits}
\def\Ext{\mathop{\mathrm{Ext}}\nolimits}
\def\id{\mathop{\mathrm{id}}\nolimits}
\def\ev{\mathop{\mathrm{ev}}\nolimits}
\def\Specm{\mathop{\mathrm{Specm}}\nolimits}
\def\supp{\mathop{\mathrm{supp}}\nolimits}
\def\mod{\mathop{\mathrm{mod}}\nolimits}
\def\Der{\mathop{\mathrm{Der}}\nolimits}

\def\z{\mathbb{Z}}
\def\c{\mathbb{C}}

\def\g{\mathfrak{g}}
\def\h{\mathfrak{h}}
\def\n{\mathfrak{n}}
\def\m{\mathfrak{m}}
\def\P{\mathcal{P}}
\def\F{\mathcal{F}}

\title{{\bfseries Extensions between finite-dimensional simple modules\\ over a generalized current Lie algebra}}

\author{Ryosuke Kodera}

\date{}

\makeatletter
\let\@old@@maketitle=\@maketitle
\def\@maketitle{%
\footnotetext{%
\hspace*{-1em}\hspace*{-\footnotesep}%
Graduate School of Mathematical Sciences, The University of Tokyo, 3-8-1 Komaba, Meguro-ku, Tokyo 153-8914, Japan.\\
E-mail address: kryosuke@ms.u-tokyo.ac.jp\\
2000 \textit{Mathematics Subject Classification.} 17B65, 17B10, 16E30.}
\@old@@maketitle
}
\makeatother

\begin{document}
\maketitle

\begin{abstract}
We calculate the first extension groups for finite-dimensional simple modules over an arbitrary generalized current Lie algebra, which includes the case of loop Lie algebras and their multivariable analogs.
\end{abstract}

\section{Introduction}
In this article we are concerned with finite-dimensional modules over a generalized current Lie algebra $A \otimes \g$, where $\g$ is a finite-dimensional semisimple Lie algebra defined over the complex number field $\c$ and $A$ is a nonzero finitely generated commutative $\c$-algebra.
This class of Lie algebras includes loop Lie algebras and their multivariable analogs.
Since the category of finite-dimensional $A \otimes \g$-modules is not semisimple in general, we need to study its homological properties.
The purpose of this article is to give an answer for the following problem which naturally arises during the study.

\begin{prob}
Calculate $\Ext^1(V, V')$ for any finite-dimensional simple $A \otimes \g$-modules $V,V'$.
\end{prob}

This work can be regarded as both a refinement and a generalization of one by Chari and Moura \cite{MR2078944}, which determines the blocks of the category of finite-dimensional modules over a loop Lie algebra.
One of the main tools used in \cite{MR2078944} is a family of the universal finite-dimensional highest weight modules called Weyl modules.
In \cite{MR2078944} some knowledge on composition factors of Weyl modules is established (See Corollary~\ref{cor:weyl_factor}) and they use it to determine the blocks.
The notion of Weyl modules is generalized by Feigin and Loktev \cite{MR2102326} for a general $A$.
They also prove the properties of Weyl modules mentioned above in a general situation.
Then techniques used in \cite{MR2078944} are applicable for a general case and in fact yield a stronger result than the block decomposition of the category.

We also refer a work by Chari and Greenstein \cite{MR2189867}.
They obtain a similar result on calculation of the first extension groups for the case of current Lie algebras by a different approach.
See Remark~\ref{rem:cg} and \ref{rem:cg2} for a more precise explanation.

Now we state the main result.
We denote by $V_\m(\lambda)$ the evaluation module associated with the finite-dimensional simple $\g$-module $V(\lambda)$ with highest weight $\lambda$ at a maximal ideal $\m$ of $A$.

\begin{thm}\label{thm:main_intro}
Let $V,V'$ be finite-dimensional simple $A \otimes \g$-modules.
\begin{enumerate}
\item 
If $\Ext^1(V, V') \neq 0$ then
\[V \simeq V_{\m_1}(\lambda_1) \otimes \cdots \otimes V_{\m_{r-1}}(\lambda_{r-1}) \otimes V_{\m_r}(\lambda_r)\]
and
\[V' \simeq V_{\m_1}(\lambda_1) \otimes \cdots \otimes V_{\m_{r-1}}(\lambda_{r-1}) \otimes V_{\m_r}(\lambda'_r)\]
for some $r \in \z_{\geq 0}, \m_1, \ldots, \m_r \in \Specm A, \lambda_1, \ldots, \lambda_r, \lambda'_r \in P^+$.

\item Suppose that
\[V = V_{\m_1}(\lambda_1) \otimes \cdots \otimes V_{\m_{r-1}}(\lambda_{r-1}) \otimes V_{\m_r}(\lambda_r)\]
and
\[V' = V_{\m_1}(\lambda_1) \otimes \cdots \otimes V_{\m_{r-1}}(\lambda_{r-1}) \otimes V_{\m_r}(\lambda'_r)\]
where $\lambda_r$ and $\lambda'_r$ are possibly equal to zero.

If $\lambda_r \neq \lambda'_r$ then
\begin{align*}
\Ext^1(V, V') &\simeq \Ext^1(V_{\m_r}(\lambda_r), V_{\m_r}(\lambda'_r))\\
&\simeq \Hom_{\mathfrak{g}}(\mathfrak{g} \otimes V(\lambda_r), V(\lambda'_r)) \otimes \Der(A, A/\m_r).
\end{align*}

If $\lambda_r = \lambda'_r$ then
\begin{align*}
\Ext^1(V, V') &\simeq \displaystyle{\bigoplus_{i=1}^r} \Ext^1(V_{\m_i}(\lambda_i), V_{\m_i}(\lambda_i))\\
&\simeq \bigoplus_{i=1}^r \bigl(\Hom_{\mathfrak{g}}(\mathfrak{g} \otimes V(\lambda_i), V(\lambda_i)) \otimes \Der(A, A/\m_i)\bigr).
\end{align*}
\end{enumerate}
\end{thm}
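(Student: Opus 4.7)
My plan is to combine a Künneth-type decomposition for extensions between tensor products at disjoint maximal ideals with a direct cocycle computation in the single-ideal case. Both parts of the theorem then follow simultaneously: (i) expresses the vanishing of cross terms in the decomposition, and (ii) records the surviving formula explicitly.

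\emph{Step 1: Single-ideal computation.} The evaluation modules $V_\m(\lambda)$ and $V_\m(\lambda')$ are pulled back along $A \otimes \g \twoheadrightarrow (A/\m) \otimes \g \simeq \g$, hence annihilated by $\m \otimes \g$. In a short exact sequence $0 \to V_\m(\lambda') \to E \to V_\m(\lambda) \to 0$, the action of $\m \otimes \g$ on $E$ lands in $V_\m(\lambda')$ and, since that submodule is $\m$-trivial, factors through a $\g$-equivariant map $(\m/\m^2) \otimes \g \otimes V(\lambda) \to V(\lambda')$. Under the identification $\Der(A, A/\m) \simeq \Hom_A(\m/\m^2, A/\m)$, this produces the extension class inside $\Hom_\g(\g \otimes V(\lambda), V(\lambda')) \otimes \Der(A, A/\m)$. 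I would then verify that the Jacobi identity on $E$ imposes no further constraint beyond $\g$-equivariance and the derivation property, and conversely that every such datum reconstructs a genuine extension, giving the second isomorphism of (ii) at a single ideal.

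\emph{Step 2: Künneth reduction via Chinese Remainder.} Any finite-dimensional $A \otimes \g$-module is annihilated by an ideal $\m_1^{N_1} \cdots \m_s^{N_s} \otimes \g$, so $\Ext^1$ can be computed inside the category of modules over the finite-dimensional quotient $(A/\m_1^{N_1} \cdots \m_s^{N_s}) \otimes \g$. By the Chinese Remainder Theorem this splits as $\bigoplus_i (A/\m_i^{N_i}) \otimes \g$, and the tensor factorization $V = V_{\m_1}(\lambda_1) \otimes \cdots \otimes V_{\m_r}(\lambda_r)$ of a simple module becomes an outer tensor product over the summands. The standard Künneth formula then gives, for decompositions $V = V_1 \otimes V_2$ and $V' = V'_1 \otimes V'_2$ along disjoint subsets of ideals,
\[
\Ext^1(V_1 \otimes V_2, V'_1 \otimes V'_2) \simeq \bigl(\Hom(V_1, V'_1) \otimes \Ext^1(V_2, V'_2)\bigr) \oplus \bigl(\Ext^1(V_1, V'_1) \otimes \Hom(V_2, V'_2)\bigr).
\]
Since simple finite-dimensional modules have $\Hom(V_i, V'_i) = \c$ when $V_i \simeq V'_i$ and $0$ otherwise, iterating on the number of factors forces $V$ and $V'$ to coincide in all but at most one evaluation factor, which is precisely (i); substituting Step 1 into the surviving terms yields the formulas of (ii).

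\emph{Main obstacle.} The delicate step is the Künneth reduction, in particular verifying that $\Ext^1$ over $A \otimes \g$ agrees with $\Ext^1$ over the finite-dimensional quotient (which rests on local nilpotence of $\m_i \otimes \g$ on finite-dimensional modules, ensuring no information is lost for sufficiently large $N_i$) and then carefully applying Künneth after the Chinese Remainder splitting. Step 1 is by contrast a relatively direct cocycle computation, and the composition-factor information on Weyl modules afforded by Corollary~\ref{cor:weyl_factor} provides an independent consistency check on (i), since those Weyl modules surject onto $V$ and their composition factors a fortiori constrain which $V'$ can admit nonzero $\Ext^1(V, V')$.
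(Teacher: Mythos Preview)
Your proposal is correct but follows a genuinely different route from the paper. The paper never passes to a finite-dimensional quotient or invokes K\"unneth; instead it uses the tensor--hom adjunction $\Ext^1(M\otimes V,V')\simeq\Ext^1(V,M^*\otimes V')$ to rewrite $\Ext^1(\mathcal V(\pi),\mathcal V(\pi'))$ as $\Ext^1(\text{tensor of evaluation modules},\mathcal V(0))$, decomposes each fiberwise factor $V_\m(\pi(\m))\otimes V_\m(\pi'(\m))^*$ into simple $\g$-summands, and then applies a key vanishing lemma (Lemma~\ref{lem:triv}) saying $\Ext^1(\mathcal V(\pi),\mathcal V(0))=0$ unless $\#\supp\pi\le 1$. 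That lemma in turn rests on the Weyl-module composition-factor information of Corollary~\ref{cor:weyl_factor}, which is therefore essential to the paper's argument rather than a mere consistency check. Your approach trades this representation-theoretic input for purely homological machinery: the Chinese Remainder splitting plus K\"unneth replaces Lemma~\ref{lem:distinct} and Lemma~\ref{lem:triv} wholesale, and Weyl modules become unnecessary. The price is the ``delicate step'' you flag---showing that every extension of $V$ by $V'$ is already a module over $(A/\prod\m_i^{N_i})\otimes\g$---which needs the observation that $\m^2\otimes\g=\m^2\otimes[\g,\g]\subset[\m\otimes\g,\m\otimes\g]$ acts trivially on any length-two module with $\m$-trivial composition factors; once that is in hand, $N_i=2$ suffices and the K\"unneth argument goes through. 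Your route is arguably more conceptual and would extend uniformly to higher $\Ext$ groups, while the paper's route stays closer to the explicit structure of the category and makes the role of Weyl modules visible.
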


By the above result, it turns out that extensions between simple modules rely on the choice of a vector of the Zariski tangent space at each point of $\Specm A$.

The article is organized as follows.
Section~2 is devoted to recall some definitions and fundamental facts.
It contains the definition of generalized current Lie algebras, the classification of finite-dimensional simple modules and various properties of Weyl modules.
The main theorem is proved in Section~3.
In Section~4 we consider the block decomposition of the category of finite-dimensional modules over a generalized current Lie algebra.
This generalizes the result by Chari and Moura \cite{MR2078944}.

\subsection*{Acknowledgments}
The author is grateful to Noriyuki Abe for fruitful discussions.
His suggestion led the author to consider a more general situation than the case of loop Lie algebras.
He would like to thank Katsuyuki Naoi, Noriyuki Abe and Yoshihisa Saito who read the manuscript carefully and gave helpful comments.
Finally he would like to express his gratitude to Yoshihisa Saito for his variable advice and kind support.


\section{Finite-dimensional modules over a generalized current Lie algebra}

\subsection{Semisimple Lie algebras}
Let $\mathfrak{g}$ be a finite-dimensional semisimple Lie algebra over the complex number field $\c$.
We denote by $\h$ a fixed Cartan subalgebra and $\n$ the nilpotent radical of a fixed Borel subalgebra containing $\h$.
Let $I$ be the index set of simple roots.
We choose Chevalley generators $e_i,h_i,f_i$ ($i \in I$) of $\g$.

We denote by $P$ the weight lattice and $Q$ the root lattice.
The set of dominant weights $P^+$ is defined by $P^+ = \{\lambda \in P \mid \langle h_i, \lambda \rangle \geq 0\ \text{for any $i \in I$}\}$.
For $\lambda, \mu \in P$ we say that $\lambda \geq \mu$ if $\lambda - \mu$ is expressed as a sum of simple roots with all nonnegative coefficients.

Let $V(\lambda)$ be the finite-dimensional simple $\mathfrak{g}$-module with highest weight $\lambda \in P^+$.
The highest weight of the dual module $V(\lambda)^*$ of $V(\lambda)$ is denoted by $\lambda^*$.

\subsection{Generalized current Lie algebras}
Let $\mathfrak{a}$ be an arbitrary Lie algebra over $\mathbb{C}$.
For a given nonzero finitely generated commutative $\c$-algebra $A$, we define the Lie algebra structure on the tensor product $A \otimes \mathfrak{a}$ by
\[[a \otimes x, b \otimes y] = ab \otimes [x,y]\]
for $a,b \in A$ and $x, y \in \mathfrak{a}$.

We call the Lie algebra $A \otimes \g$ the \emph{generalized current Lie algebra}.
The most familiar examples in this class of Lie algebras are the \emph{loop Lie algebra} for $A = \c[t,t^{-1}]$, the ring of Laurent polynomials in one variable and the \emph{current Lie algebra} for $A =\c[t]$, the ring of polynomials in one variable. 

\subsection{Simple modules}
We recall the classification of finite-dimensional simple $A \otimes \g$-modules given by Chari, Fourier and Khandai \cite{cfk}.
For each maximal ideal $\m$ of $A$, we define the \emph{evaluation homomorphism} at $\m$
\[\ev_\m \colon A \otimes \g \to \g\]
by
\[\ev_\m(a \otimes x) = a_\m x \]
for $a \in A$ and $x \in \g$, where $a_\m$ denotes the image of $a$ by the natural projection $A \to A/\m \simeq \c$.
This $\ev_\m$ is a surjective Lie algebra homomorphism. 
For a $\g$-module $V$ and a maximal ideal $\m$ of $A$, we can define the $A \otimes \g$-module structure on $V$ through $\ev_\m$.
We call it the \emph{evaluation module} associated with $V$ at $\m$ and denote by $\ev_\m^*(V)$.
We denote by $V_\m(\lambda)$ the evaluation module $\ev_\m^*(V(\lambda))$. 
This module $V_\m(\lambda)$ is simple.
Note that $V_\m(0) \simeq V_{\m'}(0)$ for any maximal ideals $\m, \m'$.
The following proposition is proved in \cite{cfk}.

\begin{prop}
\begin{enumerate}
\item The module $\bigotimes_{i=1}^r V_{\m_i}(\lambda_i)$ is simple if and only if $\m_1, \ldots, \m_r$ are all distinct.

\item Suppose that $\bigotimes_{i=1}^r V_{\m_i}(\lambda_i)$ and $\bigotimes_{i=1}^s V_{\m'_i}(\lambda'_i)$ are simple and $\lambda_1, \ldots, \lambda_r, \lambda'_1, \ldots, \lambda'_s$ are all nonzero.
Then $\bigotimes_{i=1}^r V_{\m_i}(\lambda_i)$ and $\bigotimes_{i=1}^s V_{\m'_i}(\lambda'_i)$ are isomorphic if and only if $r=s$ and the tuples $((\m_i, \lambda_i))_{1 \leq i \leq r}$ and $((\m'_i, \lambda'_i))_{1 \leq i \leq r}$ are same up to permutation.

\item Any finite-dimensional simple $A \otimes \g$-module is of the form $\bigotimes_{i=1}^r V_{\m_i}(\lambda_i)$.
\end{enumerate}
\end{prop}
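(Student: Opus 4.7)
The plan is to tackle the classification in (iii) first, since it drives the whole proposition, and then to deduce (i) and (ii) from the uniqueness it provides.

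For the ``if'' direction of (i), suppose $\m_1, \ldots, \m_r$ are pairwise distinct. The Chinese Remainder Theorem gives a surjection $A \twoheadrightarrow \prod_{i=1}^r A/\m_i \simeq \c^r$, so that $(\ev_{\m_1}, \ldots, \ev_{\m_r}) \colon A \otimes \g \to \g^{\oplus r}$ is surjective. The module $\bigotimes V_{\m_i}(\lambda_i)$ is then the pullback of the external tensor product of the $V(\lambda_i)$, which is a simple $\g^{\oplus r}$-module, hence it remains simple over $A \otimes \g$.

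The heart of the argument is (iii). Let $V$ be finite-dimensional and simple, and put $J = \{a \in A \mid (a \otimes x) \cdot V = 0\ \text{for all}\ x \in \g\}$. Because $\dim V < \infty$, the quotient $A/J$ is a finite-dimensional commutative $\c$-algebra, so it decomposes as $A/J \simeq \prod_{i=1}^r A_i$ with each $A_i$ local Artinian; the corresponding maximal ideals lift to distinct maximal ideals $\m_1, \ldots, \m_r$ of $A$. Write $N$ for the Jacobson radical of $A/J$, which is nilpotent. The key point is to show that $N \otimes \g$ acts trivially on $V$. Since $N$ is nilpotent, $N \otimes \g$ is a nilpotent, and in particular solvable, Lie ideal in $(A/J) \otimes \g$; by the invariance lemma underlying Lie's theorem it acts on the simple module $V$ by a character that must vanish on $[(A/J) \otimes \g,\, N \otimes \g]$. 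Using $[1 \otimes \g, N \otimes \g] = N \otimes [\g, \g] = N \otimes \g$, where the last equality is the semisimplicity (equivalently perfectness) of $\g$, we conclude that this character is zero. The action therefore factors through $((A/J)/N) \otimes \g \simeq \g^{\oplus r}$, and since $V$ is simple over this semisimple Lie algebra, it decomposes as an external tensor product $\bigotimes V(\lambda_i)$, i.e., $V \simeq \bigotimes_{i=1}^r V_{\m_i}(\lambda_i)$.

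Finally, for the ``only if'' in (i): if some $\m_i = \m_j$ with $i \neq j$ and $\lambda_i, \lambda_j \neq 0$, then on those two factors the $A \otimes \g$-action coincides with the diagonal $\g$-action via $\ev_{\m_i}$, and the resulting tensor product $V(\lambda_i) \otimes V(\lambda_j)$ is reducible by Clebsch--Gordan, so the whole module is not simple. For (ii), the ideal $J$ and its local decomposition are intrinsically attached to $V$, so the unordered set of $\m_i$'s is determined; the $\lambda_i$ are then read off as the highest weights of the simple $\g$-factors. The main obstacle in the plan is the step in (iii) that upgrades ``$N \otimes \g$ acts by a character'' to ``$N \otimes \g$ acts by zero''; this is precisely the place where semisimplicity of $\g$ is indispensable, and without it one could only conclude that the action factors through a nilpotent extension of $\g^{\oplus r}$ rather than $\g^{\oplus r}$ itself.
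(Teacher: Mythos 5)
The paper gives no proof of this proposition---it is stated as a quoted result and attributed to Chari--Fourier--Khandai \cite{cfk}---so there is no in-text argument to compare against. Your blind proof is essentially correct and takes a natural route for such classifications: form $J = \{a \in A \mid (a \otimes x) V = 0 \ \text{for all}\ x \in \g\}$, use finite-dimensionality of $V$ (via the linear embedding $A/J \hookrightarrow \Hom_\c(\g, \mathrm{End}(V))$) to get $A/J$ finite-dimensional Artinian, and kill the nilradical $N$ of $A/J$ via Lie's invariance lemma combined with $\g = [\g,\g]$, so that the action descends to $\g^{\oplus r}$ and $V$ becomes an external tensor product. Two minor points are worth flagging. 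First, you use but do not verify that $J$ is an ideal of $A$; this is true, but it already hinges on the perfectness of $\g$: for $a \in J$ and $b \in A$, write $x = \sum_i [y_i, z_i]$, so that $(ba \otimes x)v = \sum_i [\,b \otimes y_i,\, a \otimes z_i\,]v$, which vanishes because $a \otimes z_i$ annihilates both $(b \otimes y_i)v$ and $v$. Second, in the ``only if'' direction of (i) you correctly insert the hypothesis $\lambda_i, \lambda_j \neq 0$; this hypothesis is genuinely needed (since $V_\m(0)$ is the trivial module and may be tensored in freely with repeated $\m$) and is implicit in the paper's conventions, which index simple modules by finitely supported $\pi \colon \Specm A \to P^+$ with the tensor product taken only over $\supp\pi$.
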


Let $\mathcal{P}$ be the set of all functions from $\Specm A$ to $P^+$ with finite supports, where $\Specm A$ denotes the set of all maximal ideals of $A$.
The above proposition implies the classification of finite-dimensional simple $A \otimes \g$-modules.

\begin{thm}
The assignment
\[\pi \mapsto \bigotimes_{\m \in \supp \pi}V_\m(\pi(\m))\]
gives a one-to-one correspondence between $\mathcal{P}$ and the set of isomorphism classes of finite-dimensional simple $A \otimes \g$-modules. 
\end{thm}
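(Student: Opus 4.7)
The plan is to deduce this bijection directly from the preceding proposition in three steps: well-definedness, injectivity, and surjectivity. The only substantive issue is housekeeping around the zero weight, since part (ii) of the proposition is stated under the hypothesis that all weights appearing are nonzero.

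First I would check well-definedness. For $\pi \in \P$ the support is a finite set of distinct maximal ideals $\m_1, \ldots, \m_r$, and $\pi(\m_i) \in P^+$; part (i) then guarantees that $\bigotimes_{i=1}^r V_{\m_i}(\pi(\m_i))$ is simple, with the convention that the empty tensor product (corresponding to $\pi = 0$) is the one-dimensional trivial module.

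For injectivity, suppose $\pi, \pi' \in \P$ give isomorphic tensor products. By construction the tensor product over $\supp \pi$ involves only nonzero weights, and similarly for $\pi'$, so part (ii) of the proposition applies and asserts that the indexing tuples agree up to permutation. This forces $\supp \pi = \supp \pi'$ together with $\pi(\m) = \pi'(\m)$ on that common support; since both functions vanish off the support, $\pi = \pi'$.

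For surjectivity, let $V$ be a finite-dimensional simple $A \otimes \g$-module. By (iii), $V \simeq \bigotimes_{i=1}^s V_{\m_i}(\mu_i)$ for some maximal ideals $\m_i$ and weights $\mu_i \in P^+$, and by (i) the $\m_i$ are distinct. The key observation, and the main (mild) obstacle, is that one must first discard any tensor factors with $\mu_i = 0$: since $V_\m(0)$ is the one-dimensional trivial $A \otimes \g$-module regardless of $\m$, dropping such factors leaves the isomorphism class unchanged, and this normalization puts $V$ into a form with distinct $\m_i$'s and all $\mu_i$ nonzero. Defining $\pi(\m_i) = \mu_i$ and $\pi(\m) = 0$ elsewhere gives an element of $\P$ mapping to $V$, completing the proof.
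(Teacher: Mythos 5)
Your proof is correct and matches the implied argument in the paper, which presents this theorem as an immediate consequence of the preceding proposition and leaves the details to the reader. The only point that requires care is exactly the one you isolate: reconciling the hypothesis of part (ii) (all weights nonzero) with the general decomposition from (iii). Your normalization — strip the $V_\m(0)$ factors, noting they are copies of the trivial module independent of $\m$, and use part (i) to see the remaining $\m_i$ are already distinct — is precisely what is needed, and the rest (well-definedness via (i), injectivity via (ii) after observing the map lands in tensor products with nonzero weights by construction) is routine.
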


We denote by $\mathcal{V}(\pi)$ the finite-dimensional simple $A \otimes \g$-module which corresponds to $\pi \in \mathcal{P}$.
For $\pi \in \mathcal{P}$ we define $\pi^* \in \mathcal{P}$ by $\pi^*(\m) = \pi(\m)^*$ for $\m \in \Specm A$.
The dual module $\mathcal{V}(\pi)^*$ of $\mathcal{V}(\pi)$ is isomorphic to $\mathcal{V}(\pi^*)$.

\subsection{Weyl modules}

\begin{dfn}
Let $V$ be an $A \otimes \g$-module.
A nonzero element $v \in V$ is called a \emph{highest weight vector} if $v$ is annihilated by $A \otimes \n$ and is a common eigenvector of $A \otimes \h$.
A module is called a \emph{highest weight module} if it is generated by a highest weight vector.
For a highest weight module $V$ generated by a highest weight vector $v$, there exists $\Lambda \in (A \otimes \h)^*$ such that
\[xv = \langle x, \Lambda \rangle v\]
for every $x \in A \otimes \h$.
This $\Lambda$ is called the \emph{highest weight} of $V$. 
\end{dfn}

\begin{rem}
The above definition of highest weight modules is consistent with the usual one for the case $A = \c$.
They are called $l$-highest weight modules for the case $A = \c[t,t^{-1}]$ in the literature.
\end{rem}

Any finite-dimensional simple $A \otimes \g$-module is a highest weight module.
Recall that such a module is of the form $\mathcal{V}(\pi)$ for some $\pi \in \mathcal{P}$.
We use the same symbol $\pi$ for the highest weight of $\mathcal{V}(\pi)$.
In other words we regard $\mathcal{P}$ as a subset of $(A \otimes \h)^*$ via the classification of simple modules.
To be explicit $\pi$ is determined by
\[\langle a \otimes h, \pi \rangle = \sum_{\m \in \supp \pi}a_\m \langle h, \pi(\m) \rangle\]
for $a \in A$ and $h \in \h$.
We identify $1 \otimes \h$ with $\h$.
Then the restriction $\pi$ to $1 \otimes \h$ is identified with the element $\sum_{\m \in \supp \pi}\pi(\m) \in P^+$.
We denote by $\pi |_{\h}$ this element.
 
\begin{dfn}
Let $\pi$ be an element of $\mathcal{P}$.
The \emph{Weyl module} $\mathcal{W}(\pi)$ is the $A \otimes \g$-module generated by a nonzero element $v_\pi$ with the following defining relations:
\[(A \otimes \n)v_\pi = 0,\]
\[xv_\pi = \langle x, \pi \rangle v_\pi\]
for $x \in A \otimes \h,$
\[(1 \otimes f_i)^{\langle h_i, \pi |_{\h}\rangle +1} v_\pi = 0\]
for $i \in I$.
\end{dfn}

By the definition of the Weyl module $\mathcal{W}(\pi)$, any finite-dimensional highest weight module with highest weight $\pi$ is a quotient of $\mathcal{W}(\pi)$. 
In particular the simple module $\mathcal{V}(\pi)$ is the unique simple quotient of $\mathcal{W}(\pi)$.
We denote by $W_{\m}(\lambda)$ the Weyl module which has the simple quotient $V_{\m}(\lambda)$.
The notion of Weyl modules for the case $A = \c[t, t^{-1}]$ is introduced by Chari and Pressley~\cite{MR1850556} and the following fundamental results are proved.
Later they are generalized by Feigin and Loktev~\cite{MR2102326} for a general $A$.

\begin{thm}\label{thm:weyl}
\begin{enumerate}
\item Any Weyl module is finite-dimensional.
\item We have
\[\mathcal{W}(\pi) \simeq \bigotimes_{\m \in \supp \pi}W_{\m}(\pi(\m))\]
for any $\pi \in \mathcal{P}$.
\end{enumerate}
\end{thm}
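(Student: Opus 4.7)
The plan is to prove (i) first and then deduce (ii) with the help of the finite-dimensionality it provides.

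For (i), I would first observe that the cyclic $U(\g)$-submodule $U(\g) v_\pi \subset \mathcal{W}(\pi)$ satisfies the defining relations of the finite-dimensional simple $\g$-module $V(\pi|_{\h})$, hence is a finite-dimensional quotient of it; in particular, the $\h$-weights of $\mathcal{W}(\pi)$ are confined to the finite weight set of $V(\pi|_{\h})$. It remains to bound the dimension of each weight space. The goal is to show that the action of $A \otimes \g$ factors through the finite-dimensional Lie algebra $(A/I) \otimes \g$ for some ideal $I$ of finite codimension contained in $\bigcap_{\m \in \supp \pi} \m^N$ for large $N$. The key ingredient is a Garland-type identity in $U(A \otimes \g)$ expressing $(a \otimes f_i)^k$ modulo $U(A \otimes \g) \cdot (A \otimes \n)$ as a polynomial in the elements $a^j \otimes h_i$. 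Applied to $v_\pi$, together with the Serre-type relation $(1 \otimes f_i)^{\langle h_i, \pi|_{\h}\rangle+1} v_\pi = 0$ and the explicit $\h$-character of $v_\pi$, this forces sufficiently high powers of the relevant maximal ideals to annihilate $v_\pi$ through each simple root space $A \otimes f_i$. Iterated commutators and the PBW theorem then propagate the bound to all negative root spaces.

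For (ii), set $T := \bigotimes_{\m \in \supp \pi} W_\m(\pi(\m))$ and $v_T := \bigotimes_\m v_{\pi(\m)}$. A direct computation with the coproduct action shows that $v_T$ is annihilated by $A \otimes \n$, has $A \otimes \h$-eigenvalue equal to $\pi$ (via $\langle a \otimes h, \pi \rangle = \sum_\m a_\m \langle h, \pi(\m) \rangle$), and satisfies $(1 \otimes f_i)^{\langle h_i, \pi|_{\h}\rangle+1} v_T = 0$ because $U(\g) v_T$ is a $\g$-module quotient of $V(\pi|_{\h})$. The universal property of Weyl modules then yields a surjection $\varphi \colon \mathcal{W}(\pi) \twoheadrightarrow T$.

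For the reverse inequality $\dim T \geq \dim \mathcal{W}(\pi)$, I would invoke (i) together with the Chinese Remainder Theorem. Enlarging the ideal $I$ from (i) if necessary, write $I = \bigcap_{\m \in \supp \pi} J_\m$ with the $J_\m$ pairwise coprime and each $J_\m$ a power of $\m$, so that $(A/I) \otimes \g \simeq \bigoplus_\m L_\m$ with $L_\m := (A/J_\m) \otimes \g$ and $[L_\m, L_{\m'}] = 0$ for $\m \neq \m'$. The subspace $\mathcal{W}(\pi)_\m := U(L_\m) v_\pi$ is a finite-dimensional highest weight $L_\m$-module whose $\h$-weight is $\pi(\m)$, hence a quotient of $W_\m(\pi(\m))$. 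Because the $L_\m$ commute pairwise, the multiplication map $\bigotimes_\m \mathcal{W}(\pi)_\m \to \mathcal{W}(\pi)$ is a well-defined surjective homomorphism, so composing yields a surjection $T \twoheadrightarrow \mathcal{W}(\pi)$. Combined with $\varphi$, this forces both maps to be isomorphisms.

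The main obstacle will be (i), specifically extracting from the $\mathfrak{sl}_2$-integrability relations at each simple root, which a priori only constrain $(1 \otimes f_i) v_\pi$, a uniform finite-codimensional annihilating ideal in $A$ that controls $(a \otimes f_i) v_\pi$ for every $a \in A$. The Garland identities give polynomial relations through the simple root spaces, but one must carefully propagate them to all negative root spaces via iterated commutators while keeping track of the power of each maximal ideal involved.
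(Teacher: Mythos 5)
The paper does not prove Theorem~\ref{thm:weyl}: it states it and cites Chari and Pressley \cite{MR1850556} for $A = \c[t,t^{-1}]$ and Feigin and Loktev \cite{MR2102326} for general $A$, so there is no in-paper argument to compare against. Your sketch follows the standard route of those references (Garland identities to bound the $A$-action for finite-dimensionality, then the universal property plus a Chinese-Remainder factorization for the tensor decomposition), and the overall strategy is sound. Two points should be tightened. In (i), the inference from ``$U(\g)v_\pi$ is a finite-dimensional highest weight $\g$-module'' to ``the $\h$-weights of $\mathcal{W}(\pi)$ lie in the weight set of $V(\pi|_\h)$'' is not automatic, since $\mathcal{W}(\pi)$ is generated by $v_\pi$ over $A \otimes \g$, not over $\g$; one first needs the integrability argument that the subspace of vectors on which $1 \otimes f_i$ acts locally nilpotently is an $A \otimes \g$-submodule containing $v_\pi$, so that the $1 \otimes \g$-action on all of $\mathcal{W}(\pi)$ is integrable, whence the weights of $\mathcal{W}(\pi)$ lie in the finite saturated set below $\pi|_\h$. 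In (ii), when you assert that $U(L_\m)v_\pi$ is a highest weight $L_\m$-module attached to $\pi(\m)$, the point worth spelling out is that the unit of $A/J_\m$ embeds in $A/I$ as the idempotent $e_\m$ which reduces to $1$ in $A/\m$ and to $0$ in $A/\m'$ for every other $\m' \in \supp\pi$, so that $(e_\m \otimes h)v_\pi = \langle h, \pi(\m)\rangle v_\pi$; a priori the weight of $v_\pi$ under $1_A \otimes \h$ is $\pi|_\h$, not $\pi(\m)$, and it is precisely this idempotent computation that licenses the surjection $W_\m(\pi(\m)) \twoheadrightarrow U(L_\m)v_\pi$.
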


The following proposition is proved for the case $A=\c[t, t^{-1}]$ in \cite{MR2078944} and for a general case in \cite{MR2102326}.

\begin{prop}\label{prop:suff}
For a sufficiently large $k$, we have
\[(\mathfrak{m}^k \otimes \h)W_{\mathfrak{m}}(\lambda) = 0.\]
\end{prop}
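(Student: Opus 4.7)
The plan is to introduce the ideal
\[
I = \{a \in A \mid (a \otimes x) W_\m(\lambda) = 0 \text{ for every } x \in \g\}
\]
of $A$ and to prove that $A/I$ is an Artinian local ring with maximal ideal $\m/I$; then $(\m/I)^k = 0$ for some $k$, whence $\m^k \subseteq I$ and in particular $(\m^k \otimes \h) W_\m(\lambda) = 0$.

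First I would verify that $I$ is an ideal of finite codimension in $A$. For the ideal property, given $a \in I$ and $b \in A$, semisimplicity $\g = [\g,\g]$ lets one write any $x \in \g$ as $\sum_i [y_i, z_i]$, so that $ab \otimes x = \sum_i [a \otimes y_i, b \otimes z_i]$ acts as zero on $W_\m(\lambda)$ because each $(a \otimes y_i)$ does. Finite codimension follows from $\dim_\c W_\m(\lambda) < \infty$ (Theorem~\ref{thm:weyl}(i)): for each element $x$ in a basis of $\g$ the map $a \mapsto (a \otimes x)|_{W_\m(\lambda)}$ has finite-codimensional kernel, and $I$ is the intersection of these kernels. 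Hence $A/I$ is finite-dimensional commutative, so Artinian, and $A/I \simeq \prod_{\m'}(A/I)_{\m'}$ over the maximal ideals of $A$ containing $I$. The case $\lambda = 0$ is trivial ($W_\m(0) \simeq \c$ acts trivially, so $I = A$); for $\lambda \neq 0$, choosing $h \in \h$ with $\langle h, \lambda\rangle \neq 0$ shows $I \subseteq \m$, since $a \in I$ forces $(a \otimes h) v_\lambda = a_\m \langle h, \lambda\rangle v_\lambda = 0$ and hence $a_\m = 0$.

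The main step is to exclude any second maximal ideal $\m' \neq \m$ containing $I$. Suppose one exists and lift the idempotent of $A/I$ corresponding to $\m'$ to $\tilde e \in A$, so that $\tilde e \in \m$ (hence $\tilde e_\m = 0$ and $(\tilde e \otimes h) v_\lambda = 0$) while $\tilde e \equiv 1 \pmod{\m'}$. Under the factored map $A \otimes \g \twoheadrightarrow (A/I) \otimes \g \simeq \bigoplus_{\m''}(A/I)_{\m''} \otimes \g$, the image of $\tilde e \otimes h$ is $1 \otimes h$ in the $\m'$-summand and zero elsewhere. Combined with the annihilation of $v_\lambda$ and the fact that $v_\lambda$ generates $W_\m(\lambda)$, this should force the $\m'$-component of $W_\m(\lambda)$ to be zero, contradicting the existence of $\m'$ in the decomposition.

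I expect the main obstacle to be rigorously obtaining this decomposition of $W_\m(\lambda)$: the idempotents of $A/I$ live in the commutative ring $A/I$ and not in the Lie algebra $(A/I) \otimes \g$, so the promised $A \otimes \g$-module decomposition of $W_\m(\lambda)$ is not formally inherited from the ring decomposition of $A/I$. My planned route is through the generalized weight decomposition of $W_\m(\lambda)$ under the abelian Lie subalgebra $A \otimes \h$, after showing that every such generalized weight is \emph{supported at $\m$}, i.e., factors through $A \otimes \h \twoheadrightarrow (A/\m) \otimes \h = \h$. This key claim should follow by descending induction on the $\g$-weight, beginning with $v_\lambda$ whose $A \otimes \h$-weight $\pi(a \otimes h) = a_\m \langle h, \lambda\rangle$ visibly has this form; once it is in place, $(\tilde e \otimes h) - \tilde e_\m (1 \otimes h)$ is nilpotent on $W_\m(\lambda)$, so $\tilde e_\m = 0$ makes $\tilde e \otimes h$ nilpotent and drives the contradiction through.
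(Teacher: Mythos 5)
The paper does not prove Proposition~\ref{prop:suff}; it cites the result, from Chari--Moura \cite{MR2078944} in the case $A=\c[t,t^{-1}]$ and from Feigin--Loktev \cite{MR2102326} for general $A$. So there is no in-paper argument to compare against, and the question is whether your self-contained proof is sound. The preliminary reductions are correct and pleasant: defining $I$ to be the annihilator of $W_\m(\lambda)$ in $A$, checking it is an ideal via $\g=[\g,\g]$, noting it has finite codimension by Theorem~\ref{thm:weyl}(i), and observing $I\subseteq\m$ when $\lambda\neq 0$. This cleanly reduces the statement to showing that $\m$ is the \emph{only} maximal ideal of $A$ containing $I$, i.e.\ that $W_\m(\lambda)$ is supported (scheme-theoretically) at $\m$.

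That single remaining step, however, is exactly the hard content of the cited theorem, and your proposed route does not close it. The claim that every generalized $A\otimes\h$-weight of $W_\m(\lambda)$ factors through $A/\m\otimes\h$ is essentially equivalent to the support statement you are after, and ``descending induction on the $\g$-weight'' does not give it: starting from a generalized eigenvector $w$ in a higher weight space, the vectors $(a\otimes f_i)w$ for $a\in A$ span a subspace on which $b\otimes h$ acts (up to the known eigenvalue shift) as multiplication by $b$ on a finite-dimensional quotient of $A$, and \emph{a priori} nothing forces that quotient to be supported at $\m$ --- that is precisely what has to be proved. The known proofs close this gap only by invoking the defining relation $(1\otimes f_i)^{\langle h_i,\lambda\rangle+1}v_\lambda=0$ through the Garland/$\mathfrak{sl}_2$-identities, which produce explicit polynomial relations satisfied by the operators $a\otimes h_i$ and thereby pin down the support. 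None of that machinery appears in your outline, and you yourself flag this step as the ``main obstacle'' without resolving it. Similarly, the final idempotent contradiction as sketched needs more than nilpotence of $\tilde e\otimes h$: to conclude $\tilde e\in I$ one must show that the whole ideal $(A/I)_{\m'}\otimes\g$ acts by zero, and nilpotence on $\h$ alone does not yield that. In short, the framing via $I$ is a good way to state the problem, but the heart of the proof is missing and would have to be supplied by the Garland-relation arguments of \cite{MR1850556} and \cite{MR2102326}.
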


\begin{cor}\label{cor:weyl_factor}
\begin{enumerate}
\item Any composition factor of $W_\m(\lambda)$ is of the form $V_\m(\mu)$ for some $\mu \in P^+$.
\item Any composition factor of $\mathcal{W}(\pi)$ is of the form $\mathcal{V}(\pi')$ such that $\supp\pi' \subseteq \supp \pi$. 
\end{enumerate}
\end{cor}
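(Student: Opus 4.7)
The plan is to deduce part (i) from Proposition~\ref{prop:suff}, and then bootstrap part (ii) from Theorem~\ref{thm:weyl}(ii), using part (i) together with the simplicity criterion for tensor products of evaluation modules at distinct maximal ideals.

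For (i), fix $k \gg 0$ so that $(\m^k \otimes \h)W_\m(\lambda) = 0$. If $\mathcal{V}(\pi')$ is a composition factor of $W_\m(\lambda)$ with $\supp\pi' = \{\m_1,\ldots,\m_s\}$ and corresponding nonzero weights $\lambda_1,\ldots,\lambda_s \in P^+$, then on a highest weight vector the action of $a \otimes h$ for $a \in \m^k$ and $h \in \h$ yields the scalar $\sum_i a_{\m_i}\langle h, \lambda_i\rangle$, which must vanish. Letting $h$ range over $\h$ and using $\lambda_i \neq 0$ forces $a_{\m_i} = 0$ for every $a \in \m^k$, i.e.\ $\m^k \subseteq \m_i$, whence maximality gives $\m_i = \m$. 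Thus $\supp\pi' \subseteq \{\m\}$, so $\mathcal{V}(\pi') \simeq V_\m(\mu)$ for some $\mu \in P^+$, with the case $\supp\pi' = \emptyset$ covered by $\mu = 0$.

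For (ii), Theorem~\ref{thm:weyl}(ii) expresses $\mathcal{W}(\pi)$ as $\bigotimes_{\m \in \supp\pi} W_\m(\pi(\m))$. I would take a composition series of each tensor factor; by (i) its successive quotients are of the form $V_\m(\mu)$ for various $\mu \in P^+$. Because tensor product over $\c$ is exact in each slot and the $A \otimes \g$-action is diagonal, refining one slot at a time produces a filtration of $\mathcal{W}(\pi)$ whose successive quotients are tensor products $\bigotimes_{\m \in \supp\pi} V_\m(\mu_\m)$ of evaluation modules indexed by pairwise distinct maximal ideals. By the classification of simple modules recalled at the beginning of the section, every such tensor product is simple and isomorphic to some $\mathcal{V}(\pi')$ with $\supp\pi' \subseteq \supp\pi$, and every composition factor of $\mathcal{W}(\pi)$ must appear as a subquotient of this refined filtration.

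The main obstacle is the filtration bookkeeping in (ii), together with the mild subtlety that some $\mu_\m$ may vanish: since $V_\m(0)$ is the trivial module, such slots can be dropped from the tensor product without enlarging the support, which is precisely what guarantees the inclusion $\supp\pi' \subseteq \supp\pi$ in every case. No further input beyond Proposition~\ref{prop:suff}, Theorem~\ref{thm:weyl}, and the simplicity criterion for tensor products of evaluation modules should be needed.
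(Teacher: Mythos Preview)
Your overall strategy coincides with the paper's: deduce (i) from Proposition~\ref{prop:suff} together with the fact that $\m^k \nsubseteq \m'$ for distinct maximal ideals, and then obtain (ii) from Theorem~\ref{thm:weyl}(ii) and (i). Your filtration argument for (ii) is a correct elaboration of what the paper records as an ``immediate consequence''.

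There is, however, a genuine gap in your argument for (i). From $\sum_i a_{\m_i}\langle h,\lambda_i\rangle = 0$ for all $h\in\h$ you correctly get $\sum_i a_{\m_i}\lambda_i = 0$ in $\h^*$, but the hypothesis ``$\lambda_i \neq 0$'' alone does \emph{not} force $a_{\m_i}=0$ for each $i$: the $\lambda_i$ need not be linearly independent, so a single $a$ could produce a nontrivial linear relation among them. What is actually needed is a separation-of-points argument. Suppose some $\m_j\neq\m$. Since $\m_j$ is maximal and distinct from $\m$ and from each $\m_i$ with $i\neq j$, the ideals $\m^k$ and $\m_i$ ($i\neq j$) are all coprime to $\m_j$; hence one may choose $a\in \m^k\cap\bigcap_{i\neq j}\m_i$ with $a_{\m_j}\neq 0$. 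For this $a$ the sum collapses to $a_{\m_j}\lambda_j\neq 0$, contradicting the vanishing above. Thus every $\m_j$ equals $\m$, as desired. This is exactly the content of the fact the paper invokes, namely $\m^k\nsubseteq\m'$ whenever $\m'\neq\m$; your write-up should make the use of this fact (or the Chinese remainder theorem) explicit rather than appealing only to $\lambda_i\neq 0$.
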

\begin{proof}
The assertion of (i) is deduced from the following fact: for distinct maximal ideals $\m$ and $\m'$, we have $\m^k \nsubseteq \m'$ for any $k$.
 
The assertion of (ii) is an immediate consequence of (i) and Theorem~\ref{thm:weyl} (ii).
\end{proof}

\begin{rem}
The assertions of this corollary for the case $A = \c[t, t^{-1}]$ is proved in \cite{MR2078944} and used for the proof of vanishing of the extension groups for certain modules.
We will also use it to prove vanishing of extension groups (Lemma~\ref{lem:distinct}) under an assumption slightly different from one in \cite{MR2078944}.
\end{rem}


\section{Extensions between simple modules}\label{main}
We denote by $\Ext^1$ the first Yoneda extension functor for finite-dimensional $A \otimes \g$-modules.
The purpose of this section is to calculate $\Ext^1(V,V')$ for any finite-dimensional simple $A \otimes \g$-modules $V,V'$.
 
\subsection{Extensions between evaluation modules} 
A derivation of $A$ into an $A$-module $M$ is a $\c$-linear map $D \colon A \to M$ satisfying
\[D(ab) = aD(b) + bD(a)\]
for $a,b \in A$.
We denote by $\Der(A, M)$ the $\c$-vector space of all derivations of $A$ into $M$. 
The following proposition is a special case of the main theorem. 

\begin{prop}\label{prop:single}
We have an isomorphism
\[\Ext^1(V_\m(\lambda), V_\m(\mu)) \simeq \Hom_{\mathfrak{g}}(\mathfrak{g} \otimes V(\lambda), V(\mu)) \otimes \Der(A, A/\m)
.\]
\end{prop}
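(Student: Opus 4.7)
The plan is to realize $\Ext^1(V_\m(\lambda), V_\m(\mu))$ as a Lie algebra cohomology group $H^1(A \otimes \g, M)$ with $M = \Hom_\c(V(\lambda), V(\mu))$, and compute it via a Hochschild--Serre spectral sequence combined with Whitehead's lemmas.

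First I would reduce to a finite-dimensional quotient algebra. For any extension $0 \to V_\m(\mu) \to E \to V_\m(\lambda) \to 0$, the subspace $\m \otimes \g$ maps $E$ into $V_\m(\mu)$ and then annihilates it, so the associative product $(\m \otimes \g)(\m \otimes \g)$ kills $E$ inside $U(A \otimes \g)$. Combined with $[a \otimes x, b \otimes y] = ab \otimes [x,y]$ and $[\g,\g] = \g$, this yields $(\m^2 \otimes \g) \cdot E = 0$, so $E$ is a module over $(A/\m^2) \otimes \g$. Choosing a vector-space splitting $A/\m^2 = \c \oplus \m/\m^2$ (legitimate since $A/\m \simeq \c$) realizes this quotient as a semidirect product $\g \ltimes V$, where $V = (\m/\m^2) \otimes \g$ is an abelian ideal on which $\g$ acts through the adjoint representation on the second tensor factor; moreover $V$ acts trivially on $M$ since that action factors through $A/\m$.

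Next I would apply Hochschild--Serre for the ideal $V$: $E_2^{p,q} = H^p(\g, H^q(V, M))$. Since $V$ is abelian and $M$ is a trivial $V$-module, $H^q(V, M) \simeq \wedge^q V^* \otimes M$ is a finite-dimensional $\g$-module, and Whitehead's first and second lemmas give $H^1(\g, N) = H^2(\g, N) = 0$ for every such $N$. Hence $E_2^{1,0} = E_2^{2,0} = 0$, which forces $H^1(A \otimes \g, M) \simeq E_2^{0,1} = \Hom_\g(V, M)$. To finish I would rewrite $\Hom_\g((\m/\m^2) \otimes \g,\, V(\lambda)^* \otimes V(\mu))$ as $(\m/\m^2)^* \otimes \Hom_\g(\g \otimes V(\lambda), V(\mu))$, and observe that a derivation $D \colon A \to A/\m$ automatically vanishes on $\c \cdot 1$ and on $\m^2$ (since $D(ab) = aD(b) + bD(a) \in \m \cdot (A/\m) = 0$ when $a, b \in \m$), giving the identification $\Der(A, A/\m) \simeq (\m/\m^2)^*$.

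The only genuinely subtle step is the initial reduction: verifying that $\m^2 \otimes \g$ (as a subspace of the Lie algebra, not merely $(\m \otimes \g)^2$ in the enveloping algebra) annihilates $E$ crucially uses $[\g,\g] = \g$. After that, the remainder is a standard assembly of Hochschild--Serre and the vanishing statements of Whitehead.
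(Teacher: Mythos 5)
Your proof is correct and takes a genuinely different route from the paper's. The paper argues entirely by hand: given an extension $0 \to V_\m(\mu) \to E \to V_\m(\lambda) \to 0$, it chooses a $\g$-module splitting $j$ (using complete reducibility of $\g$), defines $\varphi_a \in \Hom_\g(\g\otimes V(\lambda),V(\mu))$ from the discrepancy between the $A\otimes\g$-action and the $\g$-action via $(a\otimes x)j(u)=a_\m j(xu)+i(\varphi_a(x\otimes u))$, checks by direct computation that $a\mapsto\varphi_a$ is a derivation into $A/\m$, exhibits the inverse explicitly, and then verifies $\c$-linearity by inspecting Baer sums and fibered products. You instead reduce immediately to the finite-dimensional Lie algebra $(A/\m^2)\otimes\g \simeq \g\ltimes V$ with $V=(\m/\m^2)\otimes\g$ abelian (the reduction is legitimate: your $[\g,\g]=\g$ argument shows $\m^2\otimes\g$ kills every extension, and the inflation $H^1((A/\m^2)\otimes\g, M)\to H^1(A\otimes\g,M)$ is automatically injective by the five-term exact sequence, hence an isomorphism), and then run Hochschild--Serre together with both Whitehead lemmas to land on $\Hom_\g(V,M)$, finishing with tensor-hom adjunction and the standard identification $\Der(A,A/\m)\simeq(\m/\m^2)^*$. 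Both proofs ultimately rest on semisimplicity of $\g$ (the paper via the splitting $j$, you via Whitehead), but yours is more structural: it makes the appearance of the Zariski cotangent space transparent and factors the whole computation through a finite-dimensional semidirect product, at the cost of invoking the Hochschild--Serre spectral sequence and Whitehead's second lemma, neither of which the paper needs. The paper's cocycle-level description has the minor advantage of producing an explicit formula for the extension attached to a given $\varphi$, which it reuses in later arguments.
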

\begin{proof}
We prove the assertion by the following steps.
\begin{description}

\item[(Step 1)] Define a map
\begin{align*}
& \Ext^1(V_\m(\lambda), V_\m(\mu))\\
& \to \{\varphi \colon A \to \Hom_{\g}(\g \otimes V(\lambda), V(\mu)) \mid \text{$\varphi$ is $\c$-linear, } \varphi(ab) = a_\m\varphi(b) + b_\m\varphi(a)\}.
\end{align*}

\item[(Step 2)] Show that the map is bijective by constructing the inverse map.

\item[(Step 3)] Show that the map is $\c$-linear.

\item[(Conclusion)] Assume that the above steps are proved.
It is obvious that
\[\{\varphi \colon A \to \Hom_{\g}(\g \otimes V(\lambda), V(\mu)) \mid \text{$\varphi$ is $\c$-linear, } \varphi(ab) = a_\m\varphi(b) + b_\m\varphi(a)\}\]
is canonically isomorphic to
\[\Hom_{\mathfrak{g}}(\mathfrak{g} \otimes V(\lambda), V(\mu)) \otimes \Der(A, A/\m).\]
Then we obtain an isomorphism as required.
\end{description}
We start to prove Step 1-3.\\

\noindent {\bf (Step 1)} Suppose that an exact sequence
\[\xymatrix{0 \ar[r] & V_\m(\mu) \ar[r]^{i} & E \ar[r]^{p} & V_\m(\lambda) \ar[r] & 0}\]
is given.
Take a splitting $j \colon V_\m(\lambda) \to E$ as $\g$-modules.
We identify $V_\m(\lambda)$ with $V(\lambda)$ and $V_\m(\mu)$ with $V(\mu)$ as $\g$-modules by restriction.
Then we define the $\c$-linear map $\varphi_a \colon \g \otimes V(\lambda) \to V(\mu)$ for each $a \in A$ via the action of $A \otimes \g$ on $E$ by 
\[(a \otimes x) j(u) = a_\m j(xu) + i(\varphi_a(x \otimes u))\]
for $x \in \g$ and $u \in V(\lambda)$.
Note that $a \mapsto \varphi_a$ defines a $\c$-linear map and $\varphi_1 = 0$.
We claim the followings:
\begin{description}
\item[{\rm (*-1)}] $\varphi_a$ does not depend on the choice of a splitting,
\item[{\rm (*-2)}] $\varphi_a$ depends only on the extension class of a given exact sequence.
\end{description}
To show (*-1), take another splitting $j'$ and let $\varphi_a'$ be the corresponding $\c$-linear map.
Then we have
\[i ((\varphi_a - \varphi_a')(x \otimes u)) = (a \otimes x)(j - j')(u) - a_\m x (j - j')(u).\]
The right-hand side is equal to zero since $(j - j')(u) \in \Ker p = \Im i$.
This shows (*-1). 
We show (*-2). 
Take two exact sequences which are equivalent: 
\[\xymatrix{0 \ar[r] & V_\m(\mu) \ar[r]^{i}\ar@{=}[d] & E \ar[r]^{p}\ar[d]^{\xi} & V_\m(\lambda) \ar[r]\ar@{=}[d] & 0 \\
0 \ar[r] & V_\m(\mu) \ar[r]^{i'} & E' \ar[r]^{p'} & V_\m(\lambda) \ar[r] & 0.}\]
Let $\varphi_a, \varphi_a'$ be the corresponding maps.
Splittings $j$ of $p$ and $j'$ of $p'$ can be taken so that $j' = \xi j$.
We have 
\[(a \otimes x)j(u) = a_\m j(xu) + i (\varphi_a(x \otimes u)),\]
\[(a \otimes x)j'(u) = a_\m j'(xu) + i' (\varphi_a'(x \otimes u))\]
by the definition of $\varphi_a, \varphi_a'$.
We see that $\varphi_a = \varphi_a'$ by applying $\xi$ to the both sides of the first equation and comparing it with the second one.
The claim is proved.

We show that $\varphi_a$ is a $\g$-module homomorphism and the equation
\[\varphi_{ab} = a_\m\varphi_{b} + b_\m\varphi_{a}\]
holds.
We have
\[(a \otimes x)(b \otimes y) j(u) = a_\m b_\m j(xyu) + a_\m i(x \varphi_b(y \otimes u)) + b_\m i(\varphi_a(x \otimes yu))\]
and hence
\begin{align*}
&(a \otimes x)(b \otimes y) j(u) - (b \otimes y)(a \otimes x) j(u)\\
& = a_\m b_\m j([x,y]u) + a_\m i(x \varphi_b(y \otimes u) - \varphi_b(y \otimes xu)) + b_\m i(\varphi_a(x \otimes yu) - y \varphi_a (x \otimes u)).
\end{align*}
Compare the above with
\[(ab \otimes [x,y]) j(u) = a_\m b_\m j([x,y]u) + i(\varphi_{ab}([x,y] \otimes u))\]
and we obtain
\[\varphi_{ab}([x,y] \otimes u) = a_\m (x \varphi_b(y \otimes u) - \varphi_b(y \otimes xu)) + b_\m (\varphi_a(x \otimes yu) - y \varphi_a(x \otimes u)).\]
Consider the case $b=1$.
Then we obtain the equation
\[\varphi_{a}([x,y] \otimes u) = \varphi_a (x \otimes yu) - y \varphi_a (x \otimes u).\]
This proves that $\varphi_a$ is a $\g$-module homomorphism.
Moreover we have
\[\varphi_{ab}([x, y] \otimes u) = a_\m\varphi_{b} ([x, y] \otimes u) + b_\m\varphi_{a} ([x, y] \otimes u)\]
and this implies that
\[\varphi_{ab} = a_\m\varphi_{b} + b_\m\varphi_{a}\]
since $[\g, \g]=\g$.

As a result we obtain a $\c$-linear map $\varphi \colon A \to \Hom_\g (\g \otimes V(\lambda), V(\mu))$ satisfying 
\[\varphi(ab) = a_\m\varphi(b) + b_\m\varphi(a).\]
This means that a map
\begin{align*}
& \Ext^1(V_\m(\lambda), V_\m(\mu))\\
& \to \{\varphi \colon A \to \Hom_{\g}(\g \otimes V(\lambda), V(\mu)) \mid \text{$\varphi$ is $\c$-linear, } \varphi(ab) = a_\m\varphi(b) + b_\m\varphi(a)\}
\end{align*}
is defined.\\

\noindent {\bf (Step 2)} Conversely if a $\c$-linear map $\varphi \colon A \to \Hom_\g(\g \otimes V(\lambda), V(\mu))$ satisfying
\[\varphi(ab) = a_\m\varphi(b) + b_\m\varphi(a)\]
is given then we can define the $A \otimes \g$-module structure on $E = V(\lambda) \oplus V(\mu)$ by
\[(a \otimes x)(u,v) = (a_\m xu, a_\m xv + \varphi(a)(x \otimes u))\]
for $u \in V(\lambda), v \in V(\mu)$.
It is obvious that this gives the inverse of the map defined in Step 1.\\

\noindent {\bf (Step 3)} We show that the bijective map is $\c$-linear.
First we show that it is additive.
Let 
\[\xymatrix{0 \ar[r] & V_\m(\mu) \ar[r]^{i_1} & E_1 \ar[r]^{p_1} & V_\m(\lambda) \ar[r] & 0},\]
\[\xymatrix{0 \ar[r] & V_\m(\mu) \ar[r]^{i_2} & E_2 \ar[r]^{p_2} & V_\m(\lambda) \ar[r] & 0}\]
be exact sequences and $\varphi^1,\varphi^2$ be the corresponding elements.
The Bear sum of the classes of the above extensions is represented by
\[\xymatrix{0 \ar[r] & V_\m(\mu) \ar[r]^{i} & E \ar[r]^{p} & V_\m(\lambda) \ar[r] & 0}\]
where $E$ is the quotient of the fibered product of $p_1$ and $p_2$ by $\Im(v \mapsto (i_1(v), -i_2(v)))$.
Note that $i$ is given by $v \mapsto (i_1(v),0) = (0, i_2(v))$ in $E$ and $p$ by $(z_1, z_2) \mapsto p_1(z_1) = p_2(z_2)$.
A splitting $j$ of $p$ as $\g$-modules is given by $u \mapsto (j_1(u), j_2(u))$ if we take splittings $j_1$ of $p_1$ and $j_2$ of $p_2$.
Then the equation
\begin{align*}
(a \otimes x)j(u)
& = (a_\m j_1(xu) + i_1(\varphi^1_a (x \otimes u)), a_\m j_2(xu) + i_2(\varphi^2_a (x \otimes u))) \\
& = a_\m j(xu) + i((\varphi^1_a + \varphi^2_a)(x \otimes u))
\end{align*}
in $E$ holds for $a \in A$ and $x \in \g$.
This shows that the map under consideration is additive.
Next we consider the multiplication by scalar.
Take an exact sequence
\[\xymatrix{0 \ar[r] & V_\m(\mu) \ar[r]^{i} & E \ar[r]^{p} & V_\m(\lambda) \ar[r] & 0}\]
and let $\varphi$ be the corresponding element.
The action of $c \in \c$ on $\Ext^1(V_\m(\lambda), V_\m(\mu))$ is described by the diagram
\[\xymatrix{0 \ar[r] & V_\m(\mu) \ar[r]^{i'}\ar@{=}[d] & E' \ar[r]^{p'}\ar[d] & V_\m(\lambda) \ar[r]\ar[d]^{c\id} & 0 \\
0 \ar[r] & V_\m(\mu) \ar[r]^{i} & E \ar[r]^{p} & V_\m(\lambda) \ar[r] & 0}\]
where $E'$ is the fibered product of $p$ and $c\id_{V_\m(\lambda)}$.
Note that $i'$ is given by $v \mapsto (i(v),0)$ and $p'$ by the second projection.
A splitting $j'$ of $p'$ is given by $u \mapsto (cj(u),u)$ where $j$ is a splitting of $p$.
Then we obtain
\begin{align*}
(a \otimes x)j'(u)
& = (c(a_\m j(xu) + i(\varphi_a (x \otimes u))), a_\m xu) \\
& = a_\m j'(xu) + i'(c\varphi_a(x \otimes u)).
\end{align*}
The proof is complete.
\end{proof}

\begin{rem}
In \cite[Proposition~3.4]{MR2078944} Chari and Moura define the map from $\Hom_{\g}(\g \otimes V(\lambda), V(\mu))$ to $\Ext^1(V_\m(\lambda), V_\m(\mu))$ as in Step 2 of the proof for the case $A = \c[t,t^{-1}]$.
We follow their idea here.
In \cite{MR2078944} the space $\Der(A, A/\m)$ is one-dimensional and its contribution is not recognized explicitly.
\end{rem}

\subsection{A key lemma}
In this subsection we show a key lemma (Lemma~\ref{lem:triv}) to prove the main theorem.

The proof of the following lemma is a copy of an argument in \cite[Lemma~5.2]{MR2078944}.
While they prove vanishing of $\Ext^1$ for modules with different spectral characters (See Section~\ref{block} for the definition of spectral characters), we show a slightly different statement.

\begin{lem}\label{lem:distinct}
Let $\pi, \pi'$ be elements of $\mathcal{P}$ and suppose that $\supp \pi \cap \supp \pi' = \varnothing$.
If $\Ext^1(\mathcal{V}(\pi), \mathcal{V}(\pi')) \neq 0$ then $\pi$ or $\pi'$ is equal to zero.
\end{lem}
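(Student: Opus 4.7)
My strategy is to show that any extension $0\to\mathcal{V}(\pi')\to E\to\mathcal{V}(\pi)\to 0$ splits when $\pi,\pi'$ are both nonzero with disjoint supports, by producing a canonical lift of the highest weight vector of $\mathcal{V}(\pi)$ to an element $\tilde w_\pi\in E$ satisfying the defining relations of $\mathcal{W}(\pi)$. The cyclic submodule $M:=U(A\otimes\g)\tilde w_\pi$ will then be a quotient of $\mathcal{W}(\pi)$, and Corollary~\ref{cor:weyl_factor} will force $M\cap\mathcal{V}(\pi')=0$, giving the splitting $E=M\oplus\mathcal{V}(\pi')$ which contradicts the presumed nontriviality.

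To produce $\tilde w_\pi$ I would decompose $E$ into generalized eigenspaces for the commutative Lie subalgebra $A\otimes\h$. On any simple $\mathcal{V}(\sigma)$ this subalgebra acts semisimply, with eigenvalues $a\otimes h\mapsto\sum_{\m\in\supp\sigma}a_\m\langle h,\mu_\m\rangle$ indexed by tuples of weight vectors across the tensor factors. A short Chinese-Remainder-Theorem argument using $\pi\neq 0$ together with $\supp\pi\cap\supp\pi'=\varnothing$ shows that the highest-weight eigenvalue $\chi_\pi$ of $\mathcal{V}(\pi)$ does not occur in $\mathcal{V}(\pi')$. Hence the $\chi_\pi$-generalized eigenspace of $E$ is one-dimensional and yields an honest eigenvector $\tilde w_\pi$ lifting the highest weight vector, automatically satisfying $(a\otimes h)\tilde w_\pi=\chi_\pi(a\otimes h)\tilde w_\pi$, i.e.\ Weyl relation (ii).

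The main obstacle is Weyl relation (i), $(a\otimes e_i)\tilde w_\pi=0$ for all $a\in A$ and $i\in I$. This vector lies in $\mathcal{V}(\pi')$, and using $[b\otimes h,a\otimes e_i]=\alpha_i(h)(ab\otimes e_i)$ together with relation (ii) one derives the identity
\[ ((b\otimes h)-\chi_\pi(b\otimes h))(a\otimes e_i)\tilde w_\pi=\alpha_i(h)(ab\otimes e_i)\tilde w_\pi \]
inside $\mathcal{V}(\pi')$. Decomposing both sides in the $A\otimes\h$-eigenspaces of $\mathcal{V}(\pi')$, a nonzero $\tau$-component of $(a_0\otimes e_i)\tilde w_\pi$ for some $a_0$ would force the element $\theta_{b,\tau}:=\sum_{\m'}b_{\m'}\mu'_{\m'}(\tau)-\sum_{\m}b_\m\pi(\m)\in\h^*$ to lie in $\c\alpha_i$ for every $b\in A$; taking $b$ supported only at a single $\m\in\supp\pi$ yields $\pi(\m)\in\c\alpha_i$, which already contradicts $\pi(\m)\neq 0$ dominant whenever $\alpha_i$ belongs to a simple component of $\g$ of rank $\geq 2$, since no nonzero dominant weight is proportional to a simple root. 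In the rank-one case I would iterate the identity with $bb'$ in place of $b$ to conclude that the scalar $c_\tau(b):=\theta_{b,\tau}/\alpha_i$ satisfies $c_\tau(bb')=c_\tau(b)c_\tau(b')$; since also $c_\tau(1)=1$ and $c_\tau$ is $\c$-linear, it is a unital ring homomorphism $A\to\c$ and therefore $c_\tau=\ev_{\m_*}$ for some $\m_*\in\Specm A$. The final obstacle is to rule out each choice of $\m_*$ by inspecting coefficients: the coefficient of $b_{\m_*}$ in $c_\tau$ equals $-\pi(\m_*)/\alpha_i<0$ when $\m_*\in\supp\pi$, while any other $\m_*$ would force the coefficient $-\pi(\m)/\alpha_i$ at some $\m\in\supp\pi$ to vanish, contradicting $\m\in\supp\pi$.

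Once (i) is established, relation (iii), $(1\otimes f_i)^{\langle h_i,\pi|_\h\rangle+1}\tilde w_\pi=0$, is automatic by standard $\mathfrak{sl}_2$-theory applied to the $\mathfrak{sl}_{2,i}$-submodule of $E$ generated by $\tilde w_\pi$, which is a finite-dimensional highest weight $\mathfrak{sl}_{2,i}$-module of integer highest weight $\langle h_i,\pi|_\h\rangle$. Therefore $M$ is a quotient of $\mathcal{W}(\pi)$ and by Corollary~\ref{cor:weyl_factor} every composition factor has the form $\mathcal{V}(\sigma)$ with $\supp\sigma\subseteq\supp\pi$. Since $M$ surjects onto $\mathcal{V}(\pi)$ with kernel $M\cap\mathcal{V}(\pi')\subseteq\mathcal{V}(\pi')$, simplicity of $\mathcal{V}(\pi')$ forces this intersection to be $0$ or all of $\mathcal{V}(\pi')$; the latter would put $\mathcal{V}(\pi')$ among the composition factors of $M$, giving $\supp\pi'\subseteq\supp\pi$ in contradiction with the hypotheses. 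Hence $M\cap\mathcal{V}(\pi')=0$ and $E=M\oplus\mathcal{V}(\pi')$ splits, completing the proof.
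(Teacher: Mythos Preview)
Your argument is correct, but it takes a considerably more laborious route than the paper's. Both proofs lift the highest weight vector of $\mathcal{V}(\pi)$ to an $A\otimes\h$-eigenvector $\tilde w_\pi\in E$, show that the cyclic submodule it generates is a quotient of $\mathcal{W}(\pi)$, and then invoke Corollary~\ref{cor:weyl_factor} to conclude that $\mathcal{V}(\pi')$ cannot appear among its composition factors unless $\pi'=0$. The difference lies entirely in how one verifies the relation $(A\otimes\n)\tilde w_\pi=0$. You attack this head-on: from the commutation identity you extract $\theta_{b,\tau}\in\c\alpha_i$, dispose of the rank~$\geq 2$ case by the observation that no nonzero dominant weight is proportional to a simple root, and in the rank-one case push further to show that $c_\tau$ is a $\c$-algebra homomorphism $A\to\c$ and then rule out each possible $\m_*$ by comparing coefficients via the Chinese Remainder Theorem. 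The paper instead makes a single reduction: one may assume $\pi'|_\h\not>\pi|_\h$ (replacing the sequence by its dual otherwise), and then $(a\otimes e_i)\tilde w_\pi$, which lies in $\mathcal{V}(\pi')$ and has $\h$-weight $\pi|_\h+\alpha_i$, must vanish because $\pi|_\h+\alpha_i\leq\pi'|_\h$ would contradict $\pi'|_\h\not>\pi|_\h$. Your approach avoids the duality trick and works symmetrically in $\pi$ and $\pi'$, but the paper's weight-comparison replaces your entire treatment of relation~(i) with a single sentence. A small expository remark: your justification of $c_\tau(1)=1$ implicitly uses that the $\h$-weight of $v_\tau$ is $\pi|_\h+\alpha_i$, and the multiplicativity $c_\tau(bb')=c_\tau(b)c_\tau(b')$ really comes from applying the identity once with $(a_0,bb')$ and once with $(a_0b,b')$, not merely ``with $bb'$ in place of $b$''.
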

\begin{proof}
We may assume that either of $\pi$ or $\pi'$ is not equal to zero since $\Ext^1(\mathcal{V}(0), \mathcal{V}(0)) = 0$ by Proposition~\ref{prop:single}.
This assumption implies that $\pi \neq \pi'$.

Let
\[\xymatrix{0 \ar[r] & \mathcal{V}(\pi') \ar[r] & E \ar[r]^{p} & \mathcal{V}(\pi) \ar[r] & 0}\]
be a nonsplit exact sequence.
First we assume that $\pi' |_\h \not > \pi |_\h$.
Let $\mathcal{V}(\pi)_\pi$ be the one-dimensional subspace generated by a highest weight vector of $\mathcal{V}(\pi)$.
By the assumption $\pi' |_\h \not > \pi |_\h$, the subspace $p^{-1}(\mathcal{V}(\pi)_\pi)$ of $E$ is annihilated by $A \otimes \n$.
Since $p^{-1}(\mathcal{V}(\pi)_\pi)$ is stable by $A \otimes \h$, we can take a common eigenvector of $A \otimes \h$ in $p^{-1}(\mathcal{V}(\pi)_\pi)$ and denote it by $v$.
Then $v$ is a highest weight vector of $E$.
Consider the submodule of $E$ generated by $v$.
This submodule is not isomorphic to $\mathcal{V}(\pi')$ since their highest weights are different.
Then it follows that the submodule coincides with $E$ since the length of $E$ is two and the sequence does not split.
Hence $E$ is a highest weight module with highest weight $\pi$ and then a quotient of the Weyl module $\mathcal{W}(\pi)$. 
Therefore $\pi'$ must be equal to zero by Corollary \ref{cor:weyl_factor} and the assumption $\supp \pi \cap \supp \pi' = \varnothing$.
Next assume that $\pi' |_\h > \pi |_\h$.
In this case, take the dual of the exact sequence.
Then we obtain the exact sequence 
\[\xymatrix{0 \ar[r] & \mathcal{V}(\pi^*) \ar[r] & E^* \ar[r] & \mathcal{V}((\pi')^*) \ar[r] & 0}\]
and have $\pi^* |_\h \not > (\pi')^* |_\h$.
This implies that $\pi$ is equal to zero. 
\end{proof}

We recall an important fact (Corollary \ref{cor:rigid}) which will be used repeatedly in the sequel.
Let $M$ be a finite-dimensional $A \otimes \g$-module.
Then the exact functor $M \otimes -$ is defined.

\begin{prop}
The functor $M^* \otimes -$ is a right and left adjoint functor of $M \otimes -$.\end{prop}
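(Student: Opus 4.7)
The plan is to derive both adjunctions from the standard rigid structure on the tensor category of finite-dimensional $A \otimes \g$-modules. Concretely, I would construct natural evaluation and coevaluation morphisms relating $M$ and $M^*$, and then invoke the formal consequence that a dual pair in a monoidal category yields an adjunction.

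Fix a basis $\{v_i\}$ of $M$ with dual basis $\{v_i^*\}$ of $M^*$, and regard $\c$ as the trivial $A \otimes \g$-module. I would introduce
\[\mathrm{ev}_M \colon M^* \otimes M \to \c, \quad f \otimes v \mapsto f(v),\]
\[\mathrm{coev}_M \colon \c \to M \otimes M^*, \quad 1 \mapsto \sum_i v_i \otimes v_i^*,\]
and verify that both are $A \otimes \g$-linear. For $\mathrm{ev}_M$, using $(xf)(v) = -f(xv)$ and $x(f \otimes v) = (xf) \otimes v + f \otimes xv$, one gets $\mathrm{ev}_M(x(f \otimes v)) = -f(xv) + f(xv) = 0$ as required. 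For $\mathrm{coev}_M$, writing $xv_j = \sum_i a_{ij} v_i$ forces $xv_j^* = -\sum_k a_{jk} v_k^*$, and the two halves of $x \cdot \mathrm{coev}_M(1)$ cancel after relabeling indices. This is essentially the only computation in the proof and is standard.

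Next I would deduce the adjunction $(M \otimes -) \dashv (M^* \otimes -)$ from the dual pair just constructed. Either one checks the two triangle identities for $\mathrm{ev}_M$ and $\mathrm{coev}_M$, which are purely formal and reduce to the dual basis relations, or one writes the adjunction isomorphism directly as
\[\Hom(M \otimes N, P) \xrightarrow{\sim} \Hom(N, M^* \otimes P), \quad f \mapsto (\id_{M^*} \otimes f) \circ (\mathrm{coev}_M \otimes \id_N),\]
with inverse built from $\mathrm{ev}_M$. This shows that $M^* \otimes -$ is a right adjoint of $M \otimes -$.

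For the left adjoint statement I would apply the same construction to $M^*$ in place of $M$: this produces $\mathrm{ev}_{M^*}$ and $\mathrm{coev}_{M^*}$, showing that $(M^*)^* \otimes -$ is a right adjoint of $M^* \otimes -$. Since $M$ is finite-dimensional, the canonical evaluation map $M \to M^{**}$ is an $A \otimes \g$-module isomorphism, so $(M^*)^* \otimes -$ is canonically isomorphic to $M \otimes -$, and therefore $M^* \otimes -$ is also a left adjoint of $M \otimes -$. There is no serious obstacle; the only input beyond formal category theory is the $A \otimes \g$-linearity of $\mathrm{ev}$ and $\mathrm{coev}$, and finite-dimensionality of $M$ is used twice — once so that $\mathrm{coev}_M$ is well defined and once to identify $M$ with $M^{**}$.
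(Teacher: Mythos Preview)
Your argument is correct. The paper does not actually prove this proposition: it simply asserts it as ``a general fact which holds for the category of finite-dimensional modules over a Hopf algebra with an involutive antipode defined over a field.'' Your write-up via the rigid monoidal structure (evaluation/coevaluation morphisms, triangle identities, and the identification $M \simeq M^{**}$) is precisely the standard way to unpack that general fact, so you are supplying the proof the paper omits rather than taking a different route. The only content beyond formal category theory is, as you note, the $A \otimes \g$-linearity of $\mathrm{ev}_M$ and $\mathrm{coev}_M$, and this follows because the antipode of $U(A \otimes \g)$ is $x \mapsto -x$ on primitives, which is exactly the computation you carried out.
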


This is a general fact which holds for the category of \emph{finite-dimensional} modules over a Hopf algebra with an involutive antipode defined over a field.
The proposition immediately implies the following.

\begin{cor}\label{cor:rigid}
We have the natural isomorphisms
\[\Ext^1(V, M \otimes V') \simeq \Ext^1(M^* \otimes V, V'),\]
\[\Ext^1(M \otimes V, V') \simeq \Ext^1(V, M^* \otimes V')\]
for $A \otimes \g$-modules $V,V',M$. 
\end{cor}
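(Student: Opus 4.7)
The plan is to extract the Ext isomorphisms directly from the Hom adjunctions of the preceding Proposition, exploiting the crucial fact that the tensor product is taken over $\c$ and so both $M \otimes -$ and $M^* \otimes -$ are \emph{exact} functors on the category of finite-dimensional $A \otimes \g$-modules. I will spell out the second isomorphism $\Ext^1(M \otimes V, V') \simeq \Ext^1(V, M^* \otimes V')$; the first then follows by the analogous argument applied to the other adjunction, or alternatively by replacing $M$ with $M^*$ in the second and invoking $(M^*)^* \simeq M$.

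Let $\eta_V \colon V \to M^* \otimes M \otimes V$ and $\epsilon_{V'} \colon M \otimes M^* \otimes V' \to V'$ denote the unit and counit of the adjunction $\Hom(M \otimes V, V') \simeq \Hom(V, M^* \otimes V')$. Given a class in $\Ext^1(M \otimes V, V')$ represented by a short exact sequence
\[0 \to V' \to E \to M \otimes V \to 0,\]
I would apply the exact functor $M^* \otimes -$ to obtain
\[0 \to M^* \otimes V' \to M^* \otimes E \to M^* \otimes M \otimes V \to 0,\]
and then pull back along $\eta_V$ to produce an extension $0 \to M^* \otimes V' \to E' \to V \to 0$ representing the desired class in $\Ext^1(V, M^* \otimes V')$. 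The reverse map is constructed dually: apply $M \otimes -$ to an extension of $V$ by $M^* \otimes V'$ and push out along $\epsilon_{V'}$. Both assignments are $\c$-linear and compatible with the Baer sum by the functoriality of pullback and pushout.

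It remains to verify that the two operations are mutually inverse. This is a formal consequence of the triangle identities $\epsilon_{M \otimes X} \circ (M \otimes \eta_X) = \id_{M \otimes X}$ and $(M^* \otimes \epsilon_Y) \circ \eta_{M^* \otimes Y} = \id_{M^* \otimes Y}$: composing the two constructions on a given extension yields, after a standard diagram chase involving the pullback and pushout, an extension equivalent to the original.

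I do not anticipate any serious obstacle; the argument is essentially formal once the exactness of the two tensor functors is observed. The only slightly delicate point is the diagrammatic bookkeeping in the triangle-identity verification. The slickest restatement of the whole content is that a Hom-adjunction between two exact functors automatically derives to an Ext-adjunction, and that is the general principle being invoked here.
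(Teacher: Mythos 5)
Your proof is correct and takes essentially the same route as the paper: the paper simply declares the corollary an immediate consequence of the adjunction proposition, and the explicit Yoneda construction you give (apply the exact adjoint functor, then pull back along the unit, respectively push out along the counit, with the triangle identities showing the two maps are mutually inverse) is precisely the description the paper itself records in Remark~\ref{rem:rigid}. You have merely filled in the standard formal argument that a Hom-adjunction between exact functors passes to Yoneda $\Ext$, which is the content the paper leaves implicit.
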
 

\begin{rem}\label{rem:rigid}
We give explicit descriptions of the morphisms in Corollary~\ref{cor:rigid}.
The morphism
\[\Ext^1(M^* \otimes V, V') \to \Ext^1(V, M \otimes V')\]
is described as follows.
Let
\[\xymatrix{0 \ar[r] & V' \ar[r] & E \ar[r] & M^* \otimes V \ar[r] & 0}\]
be an exact sequence which represents an extension class in $\Ext^1(M^* \otimes V, V')$.
Then the corresponding element of $\Ext^1(V, M \otimes V')$ is represented by the first row of the diagram
\[\xymatrix{0 \ar[r] & M \otimes V' \ar[r]\ar@{=}[d] & E' \ar[r]\ar[d] & V \ar[r]\ar[d] & 0 \\
0 \ar[r] & M \otimes V' \ar[r] & M \otimes E \ar[r] & M \otimes M^* \otimes V \ar[r] & 0}\]
where $E'$ is the fibered product which makes the right square cartesian.
The other morphisms are obtained in similar ways.
\end{rem}

\begin{lem}\label{lem:triv}
Let $\pi$ be an element of $\mathcal{P}$.
We have $\Ext^1(\mathcal{V}(\pi), \mathcal{V}(0)) = 0$ and $\Ext^1(\mathcal{V}(0), \mathcal{V}(\pi)) = 0$ unless $\#\supp\pi =1$.
\end{lem}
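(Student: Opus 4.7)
The plan is to reduce to Lemma~\ref{lem:distinct} by peeling off one tensor factor of $\mathcal{V}(\pi)$ via the adjunction in Corollary~\ref{cor:rigid}. I would treat the two cases $\#\supp\pi = 0$ and $\#\supp\pi \geq 2$ permitted by the hypothesis.

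In the trivial case $\#\supp\pi = 0$ we have $\pi = 0$, so both Ext groups in question coincide with $\Ext^1(V_\m(0), V_\m(0))$ for an arbitrary $\m \in \Specm A$. I would invoke Proposition~\ref{prop:single} to identify this with $\Hom_\g(\g, \c) \otimes \Der(A, A/\m)$, and observe that semisimplicity of $\g$ forces $[\g, \g] = \g$ and hence $\Hom_\g(\g, \c) = 0$.

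For $\#\supp\pi \geq 2$ I would choose any $\m_0 \in \supp\pi$ and split $\pi = \pi_1 + \pi_2$ with $\supp\pi_1 = \{\m_0\}$ and $\supp\pi_2 = \supp\pi \setminus \{\m_0\}$, so that both $\pi_1,\pi_2$ are nonzero with disjoint supports and $\mathcal{V}(\pi) \simeq \mathcal{V}(\pi_1) \otimes \mathcal{V}(\pi_2)$ by the classification of simple modules. Applying Corollary~\ref{cor:rigid} together with $\mathcal{V}(\pi_1)^* \simeq \mathcal{V}(\pi_1^*)$ yields
\[\Ext^1(\mathcal{V}(\pi), \mathcal{V}(0)) \simeq \Ext^1(\mathcal{V}(\pi_2), \mathcal{V}(\pi_1^*)),\]
and since $\supp\pi_1^* = \supp\pi_1$ is disjoint from $\supp\pi_2$ while both $\pi_1^*$ and $\pi_2$ are nonzero, Lemma~\ref{lem:distinct} forces vanishing. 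The analogous manipulation on the other side gives $\Ext^1(\mathcal{V}(0), \mathcal{V}(\pi)) \simeq \Ext^1(\mathcal{V}(\pi_1^*), \mathcal{V}(\pi_2))$, which vanishes by the same application of Lemma~\ref{lem:distinct} (or equivalently by dualizing the short exact sequence).

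I do not anticipate any serious obstacle: the only real content is recognizing that the hypothesis $\#\supp\pi \neq 1$ is precisely what allows one to factor off a nonzero simple submodule whose support is disjoint from the support of the complementary factor, placing us squarely inside the hypothesis of Lemma~\ref{lem:distinct}.
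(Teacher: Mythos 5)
Your proof is correct and takes essentially the same route as the paper: in both, the case $\#\supp\pi\geq 2$ is handled by factoring $\mathcal{V}(\pi)$ into an evaluation module at a single point tensored with the rest, applying the adjunction of Corollary~\ref{cor:rigid}, and concluding by Lemma~\ref{lem:distinct}, while the case $\pi=0$ is disposed of by Proposition~\ref{prop:single}. The only cosmetic differences are that you spell out why $\Hom_\g(\g,\c)=0$ and treat the second Ext group by a symmetric adjunction argument rather than by dualizing the short exact sequence.
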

\begin{proof}
Assume that $\#\supp\pi \geq 2$.
Then we can divide $\supp\pi = \{\m\} \sqcup \supp\pi'$ for some $\m$ and nonzero $\pi'$.
We have $\mathcal{V}(\pi) \simeq V_\m(\pi(\m)) \otimes \mathcal{V}(\pi')$.
Hence
\[\Ext^1(\mathcal{V}(\pi), \mathcal{V}(0)) \simeq \Ext^1(\mathcal{V}(\pi'), V_\m(\pi(\m)^*))\]
and the right-hand side is equal to zero by Lemma \ref{lem:distinct}.
The assertion $\Ext^1(\mathcal{V}(0), \mathcal{V}(\pi)) = 0$ is proved by taking the dual.

The assertion $\Ext^1(\mathcal{V}(0), \mathcal{V}(0)) = 0$ is a consequence of Proposition \ref{prop:single}.
\end{proof}

\begin{rem}\label{rem:cg}
In fact, by Proposition~\ref{prop:single}, it is easy to prove a stronger result than the statement of Lemma~\ref{lem:triv}.
We state it without a proof since it is not used in the sequel.
The followings are equivalent for a finite-dimensional simple $A \otimes \g$-module $V$:
\begin{itemize}
\item $\Ext^1(V, \mathcal{V}(0)) \neq 0,$
\item $\Ext^1(\mathcal{V}(0), V) \neq 0,$
\item $V \simeq V_\m(\theta)$ for some $\m \in \Specm A$ satisfying $\m / \m^2 \neq 0$, where $\theta$ denotes the highest root of $\g$.
\end{itemize}
This result for the case $A = \c[t]$ is proved in \cite{MR2189867} by a different approach.
They also prove that
\[\dim\Ext^1(V_\m (\theta), \mathcal{V}(0)) = \dim\Ext^1(\mathcal{V}(0), V_\m (\theta))=1\]
and deduce the following result:
\[\Ext^1(V, V') \simeq \bigoplus_{\m \in \Specm \c[t]}\Hom_{\c[t] \otimes \g}(V_\m(\theta), V^* \otimes V')\]
holds for any finite-dimensional simple $\c[t] \otimes \g$-modules $V, V'$.
\end{rem}

\subsection{Proof of the main theorem}

\begin{thm}\label{thm:main}
Let $\pi,\pi'$ be elements of $\P$.
\begin{enumerate}
\item 
If $\Ext^1(\mathcal{V}(\pi), \mathcal{V}(\pi')) \neq 0$ then
$\#\{\m \in \Specm A \mid \pi(\m) \neq \pi'(\m)\} \leq 1$.

\item 
If $\#\{\m \in \Specm A \mid \pi(\m) \neq \pi'(\m)\} = 1$ then
\begin{align*}
\Ext^1(\mathcal{V}(\pi), \mathcal{V}(\pi'))
& \simeq \Ext^1(V_{\m_0}(\pi(\m_0)), V_{\m_0}(\pi'(\m_0)))\\
& \simeq \Hom_{\g}(\g \otimes V(\pi(\m_0)), V(\pi'(\m_0))) \otimes \Der(A, A/\m_0)
\end{align*}
where $\m_0$ is the unique element of $\Specm A$ such that $\pi(\m_0) \neq \pi'(\m_0)$.

If $\pi = \pi'$ then
\begin{align*}
\Ext^1(\mathcal{V}(\pi), \mathcal{V}(\pi'))
& \simeq \bigoplus_{\m \in \supp\pi} \Ext^1(V_{\m}(\pi(\m)), V_{\m}(\pi(\m)))\\
& \simeq \bigoplus_{\m \in \supp\pi} \bigl(\Hom_{\mathfrak{g}}(\mathfrak{g} \otimes V(\pi(\m)), V(\pi(\m))) \otimes \Der(A, A/\m)\bigr).
\end{align*}
\end{enumerate}
\end{thm}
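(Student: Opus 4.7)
The plan is to reduce both parts of the theorem to the single-ideal computation of Proposition~\ref{prop:single} via the rigidity isomorphism of Corollary~\ref{cor:rigid}, with Lemma~\ref{lem:triv} serving as the vanishing mechanism. First I would apply Corollary~\ref{cor:rigid} with $M = \mathcal{V}(\pi')$ to rewrite
\[\Ext^1(\mathcal{V}(\pi), \mathcal{V}(\pi')) \simeq \Ext^1(\mathcal{V}(\pi')^* \otimes \mathcal{V}(\pi), \mathcal{V}(0)).\]
The advantage is that $\mathcal{V}(\pi')^* \otimes \mathcal{V}(\pi)$, although not simple, is completely reducible: grouping tensor factors by maximal ideal, the contribution at $\m$ is $V_\m(\pi'(\m)^*) \otimes V_\m(\pi(\m)) = \ev_\m^*(V(\pi'(\m)^*) \otimes V(\pi(\m)))$, which is $\g$-semisimple by Weyl's theorem, and tensor products of simples at distinct ideals remain simple. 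Hence the whole module decomposes as a direct sum of simples $\mathcal{V}(\rho)$, where $\rho(\m)$ ranges over the $\g$-composition factors of $V(\pi'(\m)^*) \otimes V(\pi(\m))$ with the natural multiplicities.

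Next I would invoke Lemma~\ref{lem:triv}: only summands with $\#\supp\rho = 1$ can contribute to $\Ext^1(-, \mathcal{V}(0))$, the $\rho = 0$ piece also vanishing by Proposition~\ref{prop:single}. The crucial combinatorial observation is that $\rho(\m) = 0$ requires the trivial $\g$-summand of $V(\pi'(\m)^*) \otimes V(\pi(\m))$ to appear, and by Schur's lemma this happens, with multiplicity one, precisely when $\pi(\m) = \pi'(\m)$. So a simple summand with $\supp\rho = \{\m_0\}$ exists only if $\pi$ and $\pi'$ agree at every $\m \neq \m_0$. This immediately yields part (i): whenever $\pi$ and $\pi'$ disagree at two or more ideals, the Ext group vanishes.

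For part (ii) I would isolate, for each candidate $\m_0$, the direct summand $N_{\m_0}$ of $\mathcal{V}(\pi')^* \otimes \mathcal{V}(\pi)$ consisting of simples supported exactly at $\{\m_0\}$. By the bookkeeping above, $N_{\m_0}$ coincides with the nontrivial $\g$-part of $V_{\m_0}(\pi'(\m_0)^*) \otimes V_{\m_0}(\pi(\m_0))$ (the trivial summands at the other ideals are one-dimensional copies of $\mathcal{V}(0)$ and contribute only an identity factor). Since $\Ext^1(\mathcal{V}(0), \mathcal{V}(0)) = 0$ by Proposition~\ref{prop:single}, we may harmlessly reinstate the trivial piece at $\m_0$ to obtain
\begin{align*}
\Ext^1(N_{\m_0}, \mathcal{V}(0))
& \simeq \Ext^1(V_{\m_0}(\pi'(\m_0)^*) \otimes V_{\m_0}(\pi(\m_0)), V_{\m_0}(0)) \\
& \simeq \Ext^1(V_{\m_0}(\pi(\m_0)), V_{\m_0}(\pi'(\m_0)))
\end{align*}
by rigidity once more, and Proposition~\ref{prop:single} supplies the final formula. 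When $\pi \neq \pi'$ there is a unique candidate $\m_0$; when $\pi = \pi'$, every $\m_1 \in \supp\pi$ is a candidate, yielding the direct sum.

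The main obstacle I expect is the bookkeeping in the second paragraph: nailing down precisely when the trivial $\g$-summand appears at each ideal, tracking how multiplicities combine across the different $\m$'s, and confirming that the resulting summand $N_{\m_0}$ realizes the target Ext through Proposition~\ref{prop:single}. Once this decomposition is pinned down, everything collapses onto the single-ideal case.
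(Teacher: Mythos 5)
Your proof is correct and follows essentially the same route as the paper's: apply Corollary~\ref{cor:rigid} to move everything into the form $\Ext^1(-, \mathcal{V}(0))$, decompose $\mathcal{V}(\pi')^* \otimes \mathcal{V}(\pi)$ ideal by ideal into simples, and let Lemma~\ref{lem:triv} together with Proposition~\ref{prop:single} kill every summand whose support has size other than one. The only cosmetic difference is that you package the case analysis via the Schur's-lemma observation that $\rho(\m) = 0$ forces $\pi(\m) = \pi'(\m)$, whereas the paper spells out the three subcases (*-1)--(*-3) over $S$, $T$, $T'$; the substance and the key lemmas invoked are identical.
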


\begin{rem}\label{rem:cg2}
For the case $A = \c[t]$ it is proved in \cite{MR2189867} that
\[\Ext^1(V, V') \simeq \bigoplus_{\m \in \Specm \c[t]}\Hom_{\c[t] \otimes \g}(V_\m(\theta), V^* \otimes V')\]
holds for any finite-dimensional simple $\c[t] \otimes \g$-modules $V, V'$ as explained in Remark~\ref{rem:cg}.
This implies results similar to our main theorem after some calculation essentially same as the proof below.
\end{rem}

\renewcommand{\proofname}{{\rm\textbf{Proof of Theorem \ref{thm:main}.}}}
\begin{proof}
Recall that
\[\mathcal{V}(\pi) \simeq \bigotimes_{\m \in \supp\pi} V_{\m}(\pi(\m))\]
and
\[\mathcal{V}(\pi') \simeq \bigotimes_{\m \in \supp\pi'} V_{\m}(\pi'(\m)).\]

We prove (i).
Suppose that $\Ext^1(\mathcal{V}(\pi), \mathcal{V}(\pi')) \neq 0$.
Put
\begin{align*}
&S = \supp \pi \cap \supp \pi',\\
&T = \supp \pi \setminus S,\\
&T' = \supp \pi' \setminus S.
\end{align*}
Let
\[V(\pi(\m)) \otimes V(\pi'(\m))^* \simeq \bigoplus_{j_\m} V(\nu_{j_\m})\]
be a decomposition into a direct sum of simple $\mathfrak{g}$-modules.
Note that $\nu_{j_\m} = 0$ for some $j_\m$ if and only if $\pi(\m) = \pi'(\m)$.
We have
\begin{align*}
& \Ext^1(\mathcal{V}(\pi), \mathcal{V}(\pi'))\\ 
& \simeq \Ext^1(\bigotimes_{\m \in S} (V_{\m}(\pi(\m)) \otimes V_{\m}(\pi'(\m))^*) \otimes \bigotimes_{\m \in T} V_{\m}(\pi(\m)) \otimes \bigotimes_{\m \in T'} V_{\m}(\pi'(\m))^*, \mathcal{V}(0))\\
& \simeq \Ext^1(\bigotimes_{\m \in S} \ev_{\m}^*(V(\pi(\m)) \otimes V(\pi'(\m))^*) \otimes \bigotimes_{\m \in T}V_{\m}(\pi(\m)) \otimes \bigotimes_{\m \in T'} V_{\m}(\pi'(\m))^*, \mathcal{V}(0))\\
& \simeq \bigoplus_{(j_\m)_{\m \in S}} \Ext^1(\bigotimes_{\m \in S} V_{\m}(\nu_{j_\m}) \otimes \bigotimes_{\m \in T}V_{\m}(\pi(\m)) \otimes \bigotimes_{\m \in T'} V_{\m}(\pi'(\m))^*, \mathcal{V}(0)).
\end{align*}
There is a tuple $(j_\m)_{\m \in S}$ such that 
\[\Ext^1(\bigotimes_{\m \in S} V_{\m}(\nu_{j_\m}) \otimes \bigotimes_{\m \in T}V_{\m}(\pi(\m)) \otimes \bigotimes_{\m \in T'} V_{\m}(\pi'(\m))^*, \mathcal{V}(0)) \neq 0\]
by the assumption $\Ext^1(\mathcal{V}(\pi), \mathcal{V}(\pi')) \neq 0$.
By Lemma \ref{lem:triv}, the number of nontrivial factors of the tensor product is exactly one.
Hence one of the following three cases holds:
\begin{description}
\item[{\rm (*-1)}] $\pi(\m) = \pi'(\m)$ for all $\m \in S$ but at most one element and $T = T' = \varnothing,$

\item[{\rm (*-2)}] $\pi(\m) = \pi'(\m)$ for $\m \in S,$ $\# T= 1$ and $T' = \varnothing,$

\item[{\rm (*-3)}] $\pi(\m) = \pi'(\m)$ for $\m \in S,$ $T= \varnothing$ and $\# T' = 1$.
\end{description}
The case (*-1) implies that $\#\{\m \in \Specm A \mid \pi(\m) \neq \pi'(\m)\} \leq 1$ and the case (*-2) or (*-3) implies that $\#\{\m \in \Specm A \mid \pi(\m) \neq \pi'(\m)\} = 1$.
The proof of (i) is complete.

We prove (ii).
Suppose that $\#\{\m \in \Specm A \mid \pi(\m) \neq \pi'(\m)\} \leq 1$.
Put $U = \{\m \in \Specm A \mid \pi(\m) = \pi'(\m)\}$.
We can write as
\[\mathcal{V}(\pi) \simeq \bigotimes_{\m \in U}V_{\m}(\pi(\m)) \otimes V_{\m_0}(\pi(\m_0))\]
and
\[\mathcal{V}(\pi') \simeq \bigotimes_{\m \in U}V_{\m}(\pi(\m)) \otimes V_{\m_0}(\pi'(\m_0))\]
for some $\m_0$ where $\pi(\m_0)$ and $\pi'(\m_0)$ are possibly equal to zero.
Again let
\[V(\pi(\m)) \otimes V(\pi'(\m))^* \simeq \bigoplus_{j_\m} V(\nu_{j_\m})\]
be a decomposition into a direct sum of simple $\mathfrak{g}$-modules.
Then we have 
\begin{align*}
& \Ext^1(\mathcal{V}(\pi), \mathcal{V}(\pi')) \\
& \simeq \Ext^1(\bigotimes_{\m \in U} (V_{\m}(\pi(\m)) \otimes V_{\m}(\pi(\m))^*) \otimes (V_{\m_0}(\pi(\m_0)) \otimes V_{\m_0}(\pi'(\m_0))^*), \mathcal{V}(0))\\
& \simeq \bigoplus_{(j_\m)_{\m \in U \cup \{\m_0\}}} \Ext^1(\bigotimes_{\m \in U} V_{\m}(\nu_{j_\m}) \otimes V_{\m_0}(\nu_{j_{\m_0}}), \mathcal{V}(0)).
\end{align*}
By Lemma \ref{lem:triv}, the number of nontrivial factors of the tensor product is one in every nonzero summand.
If we suppose that $\pi(\m_0) \neq \pi'(\m_0)$ then $V_{\m_0}(\pi(\m_0)) \otimes V_{\m_0}(\pi'(\m_0))^*$ does not have a trivial direct summand.
Hence
\begin{align*}
\Ext^1(\mathcal{V}(\pi), \mathcal{V}(\pi'))
& \simeq \Ext^1(V_{\m_0}(\pi(\m_0)) \otimes V_{\m_0}(\pi'(\m_0))^*, \mathcal{V}(0))\\
& \simeq \Ext^1(V_{\m_0}(\pi(\m_0)), V_{\m_0}(\pi'(\m_0))).
\end{align*}
If $\pi = \pi'$, namely $U = \Specm A$ and $\pi(\m_0) = \pi'(\m_0) = 0$, then
\begin{align*}
\Ext^1(\mathcal{V}(\pi), \mathcal{V}(\pi))
& \simeq \Ext^1(\bigotimes_{\m \in \supp\pi} (V_{\m}(\pi(\m)) \otimes V_{\m}(\pi(\m))^*), \mathcal{V}(0)) \\
& \simeq \bigoplus_{\m \in \supp\pi} \Ext^1(V_{\m}(\pi(\m)) \otimes V_{\m}(\pi(\m))^*, \mathcal{V}(0))\\
& \simeq \bigoplus_{\m \in \supp \pi} \Ext^1(V_{\m}(\pi(\m)), V_{\m}(\pi(\m))).
\end{align*}
The proof of (ii) is complete together with Proposition \ref{prop:single}, which asserts the second isomorphisms.
\end{proof}
\renewcommand{\proofname}{{\rm\textbf{Proof.}}}

\begin{rem}\label{rem:main}
We give a natural interpretation of the isomorphisms
\[\Ext^1(\mathcal{V}(\pi), \mathcal{V}(\pi')) \simeq \Ext^1(V_{\m_0}(\pi(\m_0)), V_{\m_0}(\pi'(\m_0)))\]
and
\[\Ext^1(\mathcal{V}(\pi), \mathcal{V}(\pi)) \simeq \bigoplus_{\m \in \supp\pi} \Ext^1(V_{\m}(\pi(\m)), V_{\m}(\pi(\m)))\]
in Theorem \ref{thm:main}.
According to the proof, the above isomorphisms come from the composition of the morphisms
\[\Ext^1(V,V') \to \Ext^1(M^* \otimes M \otimes V, V') \simeq \Ext^1(M \otimes V, M \otimes V')\]
for appropriate modules $V,V',M$.
This morphism coincides with the natural morphism
\[\Ext^1(V, V') \to \Ext^1(M \otimes V, M \otimes V')\]
obtained by applying the exact functor $M \otimes -$.
This is proved as follows.
Let
\[\xymatrix{0 \ar[r] & V' \ar[r] & E \ar[r] & V \ar[r] & 0}\]
be an exact sequence which represents an extension class in $\Ext^1(V, V')$.
This element maps to the extension class represented by the first row of
\[\xymatrix{0 \ar[r] & V' \ar[r]\ar@{=}[d] & E' \ar[r]\ar[d] & M^* \otimes M \otimes V \ar[r]\ar[d] & 0 \\
0 \ar[r] & V' \ar[r] & E \ar[r] &  V \ar[r] & 0}\]
by $\Ext^1(V,V') \to \Ext^1(M^* \otimes M \otimes V, V')$ and then maps to the class represented by the first row of 
\[\xymatrix{0 \ar[r] & M \otimes V' \ar[r]\ar@{=}[d] & E'' \ar[r]\ar[d] & M \otimes V \ar[r]\ar[d] & 0 \\
0 \ar[r] & M \otimes V' \ar[r] & M \otimes E' \ar[r] & M \otimes M^* \otimes M \otimes V \ar[r] & 0}\]
by $\Ext^1(M^* \otimes M \otimes V, V') \to \Ext^1(M \otimes V, M \otimes V')$, as explained in Remark~\ref{rem:rigid}.
Consider the following diagram:
\[\xymatrix{0 \ar[r] & M \otimes V' \ar[r]\ar@{=}[d] & E'' \ar[r]\ar[d] & M \otimes V \ar[r]\ar[d] & 0 \\
0 \ar[r] & M \otimes V' \ar[r]\ar@{=}[d] & M \otimes E' \ar[r]\ar[d] & M \otimes M^* \otimes M \otimes V \ar[r]\ar[d] & 0\\
0 \ar[r] & M \otimes V' \ar[r] & M \otimes E \ar[r] &  M \otimes V \ar[r] & 0.}\]
Since the composition of the right vertical maps $M \otimes V \to M \otimes M^* \otimes M \otimes V \to M \otimes V$ is identity, the first and the third rows are equivalent.
\end{rem}


\section{The block decomposition}\label{block}

We deduce the block decomposition of the category of finite-dimensional $A \otimes \g$-modules from results of Section \ref{main} by the almost same argument in \cite{MR2078944}.
We give a proof for the sake of completeness.
In the sequel \emph{we assume that $A$ is connected}, namely it is not isomorphic to a direct product of two nonzero $\c$-algebras for simplicity.
Moreover we assume that $A \neq \c$ since the block decomposition is well-known for the case $A = \c$ as completely reducibility of finite-dimensional $\g$-modules.
Let $\Xi$ be the set of all functions from $\Specm A$ to $P/Q$ with finite supports.

\begin{dfn}
For each finite-dimensional simple $A \otimes \g$-module $\mathcal{V}(\pi)$, we define its \emph{spectral character} $\chi_{\pi} \in \Xi$ by
\[\chi_{\pi}(\m) = \pi(\m)\, \mod\, Q\]
for $\m \in \Specm A$.
A finite-dimensional $A \otimes \g$-module $V$ is said to have the spectral character $\chi \in \Xi$ if $\chi = \chi_\pi$ for any composition factor $\mathcal{V}(\pi)$ of $V$.\end{dfn}

We denote by $\F$ the category of finite-dimensional $A \otimes \g$-modules.
For each $\chi \in \Xi$ we define the full subcategory $\F_\chi$ of $\F$ whose objects have the spectral character $\chi$.  

\begin{thm}\label{thm:block}
We have the block decomposition $\F = \bigoplus_{\chi \in \Xi} \F_\chi$.
\end{thm}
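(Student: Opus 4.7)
The plan is to verify the two properties characterizing a direct sum decomposition of an abelian category: that $\Hom$ and $\Ext^1$ vanish between $\F_\chi$ and $\F_{\chi'}$ whenever $\chi \neq \chi'$, and that every finite-dimensional module splits accordingly. The only substantive new ingredient beyond the Chari-Moura strategy is an observation about the root lattice; everything else is formal.

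First I would establish the Ext-vanishing at the level of simples. Suppose $\chi_\pi \neq \chi_{\pi'}$ but $\Ext^1(\mathcal{V}(\pi), \mathcal{V}(\pi')) \neq 0$. Theorem~\ref{thm:main}(i) forces $\pi$ and $\pi'$ to agree outside a single maximal ideal $\m_0$, and Theorem~\ref{thm:main}(ii) exhibits $\Hom_\g(\g \otimes V(\pi(\m_0)), V(\pi'(\m_0)))$ as a tensor factor of the Ext group. Since all weights of the adjoint module $\g$ lie in the root lattice $Q$ and $\g \otimes V(\pi(\m_0))$ is completely reducible, nonvanishing of this $\Hom$ forces $V(\pi'(\m_0))$ to be a direct summand of $\g \otimes V(\pi(\m_0))$, so $\pi'(\m_0) - \pi(\m_0) \in Q$. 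Combined with agreement of $\pi$ and $\pi'$ at every other maximal ideal, this yields $\chi_\pi = \chi_{\pi'}$, a contradiction. I would then promote this to arbitrary $X \in \F_\chi$ and $Y \in \F_{\chi'}$ with $\chi \neq \chi'$ by a standard double induction on composition length using the long exact sequence of Ext; the corresponding $\Hom$ vanishing is immediate from the composition-factor definition of $\F_\chi$.

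Finally, for the decomposition itself I would induct on the length of $V \in \F$. Pick a simple submodule $V_0 \subset V$ of spectral character $\chi_0$; by the induction hypothesis $V/V_0 = \bigoplus_\chi W_\chi$ with $W_\chi \in \F_\chi$, and let $U_\chi$ denote the preimage of $W_\chi$ in $V$. For $\chi \neq \chi_0$ the extension $0 \to V_0 \to U_\chi \to W_\chi \to 0$ splits by the Ext-vanishing just established, giving a submodule $V_\chi \subseteq U_\chi$ isomorphic to $W_\chi$; for $\chi = \chi_0$ take $V_{\chi_0} = U_{\chi_0}$, which lies in $\F_{\chi_0}$ since both $V_0$ and $W_{\chi_0}$ do. A routine verification using the direct sum decomposition of $V/V_0$ then shows $V = \bigoplus_\chi V_\chi$, completing the proof. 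The main obstacle is only bookkeeping: tracking the combinatorics of finite supports in $\Specm A$ through the length inductions, which is not a conceptual hurdle once the $Q$-congruence observation is in hand.
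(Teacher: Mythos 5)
Your argument correctly establishes that $\F = \bigoplus_{\chi \in \Xi} \F_\chi$ is a direct sum decomposition: the $Q$-congruence observation (that $\Hom_\g(\g \otimes V(\lambda), V(\mu)) \neq 0$ forces $\lambda - \mu \in Q$, since the weights of $\g$ lie in $Q$) correctly extracts the spectral-character constraint from Theorem~\ref{thm:main}, and the two length inductions that follow are routine. This piece corresponds to Proposition~\ref{prop:block}(i) in the paper, which the paper dispatches with the curt remark that it ``immediately follows from Theorem~\ref{thm:main}''; you have usefully made the hidden root-lattice step explicit.

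However, there is a genuine gap: you have shown only that the $\F_\chi$ give \emph{a} direct sum decomposition, not that they are the \emph{blocks}. Nothing in your argument rules out a given $\F_\chi$ splitting further. To certify $\F_\chi$ as a single block you must show that any two simple modules with the same spectral character are linked by a chain of nontrivial extensions. This is Proposition~\ref{prop:block}(ii), and it is where the real work lies: one reduces to evaluation modules $V_\m(\lambda)$ via Lemma~\ref{lem:tensor} (tensoring by a fixed simple module preserves linkage, because the natural map $\Ext^1(V,V') \to \Ext^1(M\otimes V, M\otimes V')$ is injective when $M^* \otimes M \to \mathcal{V}(0)$ splits), then invokes Lemma~\ref{lem:link} to produce a chain $\lambda = \lambda_0, \ldots, \lambda_r = \mu$ in $P^+$ with $\Hom_\g(\g \otimes V(\lambda_i), V(\lambda_{i+1})) \neq 0$ whenever $\lambda - \mu \in Q$, and finally uses Proposition~\ref{prop:single} together with $\Der(A, A/\m) \simeq \Hom_\c(\m/\m^2, \c) \neq 0$ (this is exactly where the standing hypothesis $A \neq \c$ enters) to turn each step of that chain into a nonvanishing $\Ext^1$. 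Without this linkage argument the theorem as stated is not proved.
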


It suffices to show the following proposition.

\begin{prop}\label{prop:block}
\begin{enumerate}
\item Any finite-dimensional indecomposable $A \otimes \g$-module has some spectral character.

\item Any finite-dimensional simple $A \otimes \g$-modules which have the same spectral character belong to the same block.
\end{enumerate}
\end{prop}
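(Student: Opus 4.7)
The plan for (i) is to first use Theorem \ref{thm:main} to rule out $\Ext^1$ between simples of distinct spectral characters, then run a dévissage. If $\Ext^1(\mathcal{V}(\pi), \mathcal{V}(\pi')) \neq 0$, then Theorem \ref{thm:main}(i) forces $\pi$ and $\pi'$ to agree off at most a single point $\m_0$, and Theorem \ref{thm:main}(ii) further requires $V(\pi'(\m_0))$ to appear as a summand of $\g \otimes V(\pi(\m_0))$. Since all weights of the latter lie in $\pi(\m_0) + Q$, we get $\pi'(\m_0) - \pi(\m_0) \in Q$, so $\chi_\pi = \chi_{\pi'}$. The long exact sequence for $\Ext$ and induction on length then yield the stronger statement that $\Ext^1(M, N) = 0$ whenever every composition factor of $M$ has spectral character $\chi$ and no composition factor of $N$ does. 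Let $V_\chi \subseteq V$ be the sum of all submodules of $V$ whose composition factors all have character $\chi$. The sum $\sum_\chi V_\chi$ is direct since any intersection of distinct summands has composition factors of incompatible characters. If the sum were proper, a simple submodule $S \subseteq V/\bigoplus_\chi V_\chi$ of some character $\chi_0$ would lift to $\tilde S \subseteq V$; pushing $0 \to \bigoplus_\chi V_\chi \to \tilde S \to S \to 0$ out along the projection to $\bigoplus_{\chi \neq \chi_0} V_\chi$ gives a sequence that splits by the vanishing above, and pulling the splitting back produces a submodule of $V$ strictly containing $V_{\chi_0}$ with all composition factors of character $\chi_0$, contradicting the maximality of $V_{\chi_0}$. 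Hence $V = \bigoplus_\chi V_\chi$, and any indecomposable has a single spectral character.

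For (ii), I reduce to a single maximal ideal. If $\chi_\pi = \chi_{\pi'}$, the set $\{\m : \pi(\m) \neq \pi'(\m)\}$ is finite and $\pi(\m) - \pi'(\m) \in Q$ at each such $\m$; altering $\pi$ one point at a time reduces to the case where $\pi$ and $\pi'$ agree off a single $\m$. In this case, given a chain of nonzero $\Ext^1$'s $V_\m(\pi(\m)) = V_\m(\lambda_0), V_\m(\lambda_1), \ldots, V_\m(\lambda_n) = V_\m(\pi'(\m))$, defining $\pi_i$ to equal $\pi$ outside $\m$ and $\lambda_i$ at $\m$, Theorem \ref{thm:main}(ii) identifies $\Ext^1(\mathcal{V}(\pi_i), \mathcal{V}(\pi_{i+1}))$ with $\Ext^1(V_\m(\lambda_i), V_\m(\lambda_{i+1}))$, which is nonzero. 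So the problem reduces to the single-site claim: for every $\m \in \Specm A$ and every pair $\lambda, \mu \in P^+$ with $\lambda - \mu \in Q$, $V_\m(\lambda)$ and $V_\m(\mu)$ lie in the same block.

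The single-site claim is the main obstacle. By Proposition \ref{prop:single}, $\Ext^1(V_\m(\alpha), V_\m(\beta)) \neq 0$ whenever $V(\beta)$ is a summand of $\g \otimes V(\alpha)$, provided $\Der(A, A/\m) \cong (\m/\m^2)^*$ is nonzero. This follows from the hypotheses on $A$: if $\m/\m^2 = 0$, Nakayama gives $\m A_\m = 0$, hence $s\m = 0$ for some $s \notin \m$, which makes $\{\m\}$ open in $\Specm A$; by connectedness $\Specm A = \{\m\}$, making $A$ local with $\m = \m A_\m = 0$, so $A = \c$, contradicting the assumption. It remains to connect any $\lambda, \mu \in P^+$ with $\lambda - \mu \in Q$ by a chain $\lambda = \lambda_0, \lambda_1, \ldots, \lambda_n = \mu$ in $P^+$ whose adjacent terms satisfy the summand condition. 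Following \cite{MR2078944}, the plan is an induction on $\langle \rho^\vee, \lambda + \mu\rangle$: use PRV-type information on summands of $V(\theta_i) \otimes V(\lambda)$ for $\theta_i$ a highest root of a simple factor of $\g$ to find $\lambda' \in P^+$ with $\lambda' \equiv \lambda \pmod{Q}$, $\lambda' < \lambda$, and $V(\lambda')$ a summand of $\g \otimes V(\lambda)$; then iterate. Checking that such $\lambda'$ always exists, especially in non-simply-laced types where not every root lies in the Weyl orbit of a $\theta_i$, is the delicate combinatorial heart of the argument.
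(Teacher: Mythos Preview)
Your proposal is correct and follows essentially the same route as the paper: part (i) is deduced from Theorem~\ref{thm:main} (the paper states this in one line, while you spell out the d\'evissage), and part (ii) reduces to the single-site case, then invokes Proposition~\ref{prop:single} together with the linking lemma from \cite{MR2078944} and the nonvanishing of $\Der(A,A/\m)$. The only cosmetic difference is that the paper packages the reduction to one maximal ideal via a separate tensor-product lemma (Lemma~\ref{lem:tensor}), whereas you appeal directly to the isomorphism in Theorem~\ref{thm:main}(ii); the content is the same.
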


We need two lemmas.

\begin{lem}\label{lem:tensor}
Let $V_1, V_2, V'_1, V'_2$ be finite-dimensional simple $A \otimes \g$-modules and suppose that $V_1$ and $V'_1$ belong to the same block, $V_2$ and $V'_2$ belong to the same block.
Then $V_1 \otimes V_2$ and $V'_1 \otimes V'_2$ belong to the same block. 
\end{lem}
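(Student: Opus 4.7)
The strategy is to reduce the lemma to a single-step claim via a chaining argument. Since $V_1$ and $V'_1$ belong to the same block there is a finite chain of simple modules $V_1 = W_0, W_1, \ldots, W_n = V'_1$ such that for each $i$ at least one of $\Ext^1(W_i, W_{i+1})$ or $\Ext^1(W_{i+1}, W_i)$ is nonzero, and similarly a chain linking $V_2$ to $V'_2$. It therefore suffices to prove the following single-step claim: whenever $\Ext^1(S, S') \neq 0$ for simple $A \otimes \g$-modules $S, S'$ (the opposite direction being symmetric), the modules $S \otimes V$ and $S' \otimes V$ belong to the same block for every simple $V$, and analogously when $V$ is tensored on the left. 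Granting this claim, chaining first through the $V_1$-chain and then through the $V_2$-chain links $V_1 \otimes V_2$ to $V'_1 \otimes V'_2$.

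For the single-step claim, the main input is Corollary~\ref{cor:rigid}, which yields
\[\Ext^1(S \otimes V, S' \otimes V) \simeq \Ext^1(S, S' \otimes V \otimes V^*).\]
Since the finite-dimensional module $V \otimes V^* \simeq \mathrm{End}(V)$ admits the trivial module as a direct summand, split off by the trace map on $\mathrm{End}(V)$, the group $\Ext^1(S, S')$ occurs as a direct summand of the right-hand side. Hence $\Ext^1(S \otimes V, S' \otimes V) \neq 0$, producing a nonsplit extension
\[0 \to S' \otimes V \to F \to S \otimes V \to 0.\]
Decomposing $F$ into its block components, at least one of these components must remain nonsplit, whence there is a block $B$ containing composition factors of both $S \otimes V$ and $S' \otimes V$.

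The main obstacle is to promote this ``some common block'' conclusion to ``the same single block'': one must verify that for any two simple $A \otimes \g$-modules $M, N$ the tensor product $M \otimes N$ has all of its composition factors in a single block of $\mathcal{F}$. The composition factors of $M \otimes N$ can differ only at maximal ideals in the shared support, where they arise from the $\g$-module decomposition of $V(\lambda) \otimes V(\mu)$ and hence differ pairwise by elements of the root lattice $Q$. Using Proposition~\ref{prop:single} these factors are linked inside $\mathcal{F}$: any root-lattice step from $V(\nu)$ to $V(\nu')$ can be realised via the adjoint action of $\g$ as a nonzero contribution to $\Hom_{\g}(\g \otimes V(\nu), V(\nu'))$, and hence, combined with a derivation in $\Der(A, A/\m)$, to $\Ext^1(V_\m(\nu), V_\m(\nu'))$. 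Granting this auxiliary claim, the single step and hence the lemma follow.
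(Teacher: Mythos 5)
Your core argument is essentially the paper's: reduce by chaining to the single-step claim, then use Corollary~\ref{cor:rigid} together with the fact that $\mathcal{V}(0)$ is a direct summand of $V \otimes V^*$ to see that $\Ext^1(S,S')$ embeds as a direct summand of $\Ext^1(S\otimes V, S'\otimes V)$, hence the latter is nonzero. The paper packages this a bit more cleanly by observing (via Remark~\ref{rem:main}) that this embedding is precisely the map induced by the exact functor $M \otimes -$, so a nonsplit extension $0 \to V' \to E \to V \to 0$ stays nonsplit after tensoring, giving the linking extension directly as $0 \to M\otimes V' \to M\otimes E \to M\otimes V \to 0$.

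Where you depart is in the final paragraph, and this is where a real issue creeps in. You correctly note that if $S\otimes V$ and $S'\otimes V$ are not simple, ``$\Ext^1 \neq 0$'' by itself yields only that some composition factors lie in a common block, and you try to close the gap by proving block-purity of $M\otimes N$. But this detour is not needed: in the only place the lemma is invoked (the proof of Proposition~\ref{prop:block}(ii)) the tensor products in question are simple, so nonvanishing of $\Ext^1$ between them already says they are linked. More importantly, the block-purity sketch you give is imprecise, and in one place simply wrong: it is \emph{not} true that $\Hom_{\g}(\g \otimes V(\nu), V(\nu')) \neq 0$ whenever $\nu - \nu' \in Q$; what holds (Lemma~\ref{lem:link}) is that there is a \emph{chain} $\nu = \lambda_0, \ldots, \lambda_r = \nu'$ in $P^+$ with $\Hom_{\g}(\g\otimes V(\lambda_i), V(\lambda_{i+1})) \neq 0$ at each step. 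You should either restrict the lemma to the case where the tensor products are simple (which is all that is used), or carry out the chaining argument carefully using Lemma~\ref{lem:link}, Proposition~\ref{prop:single}, and the single-step claim at the remaining points of the support; as written, the claimed realisation of a single root-lattice step does not hold.
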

\begin{proof}
We may assume that $V_1=V_1'$.
Put $V=V_2, V'=V_2', M=V_1=V_1'$ for the simplicity of notation.
It suffices to show the following: if $\Ext^1(V, V') \neq 0$ then $\Ext^1(M \otimes V, M \otimes V') \neq 0$.
As explained in Remark~\ref{rem:main}, the natural morphism
\[\Ext^1(V, V') \to \Ext^1(M \otimes V, M \otimes V')\]
coincides with 
\[\Ext^1(V,V') \to \Ext^1(M^* \otimes M \otimes V, V') \simeq \Ext^1(M \otimes V, M \otimes V').\]
Therefore it suffices to show that
\[\Ext^1(V,V') \to \Ext^1(M^* \otimes M \otimes V, V')\]
is injective.
This follows from the fact that the exact sequence
\[\xymatrix{0 \ar[r] & \Ker \ar[r] & M^* \otimes M \ar[r] & \mathcal{V}(0) \ar[r] & 0}\]
splits.
\end{proof}

The following lemma is proved in \cite[Proposition~1.2]{MR2078944}.

\begin{lem}\label{lem:link}
Let $\lambda, \mu \in P^+$ with $\lambda - \mu \in Q$.
Then there exists a sequence $\lambda=\lambda_0, \lambda_1, \ldots, \lambda_r=\mu $ in $P^+$ such that
\[\Hom_\g(\g \otimes V(\lambda_i), V(\lambda_{i+1})) \neq 0\]
for any $i$. 
\end{lem}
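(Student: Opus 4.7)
Observe first that the condition is symmetric in its two arguments: since $\g$ is self-dual as a $\g$-module (via the Killing form), adjunction gives
\[\Hom_\g(\g \otimes V(\lambda_i), V(\lambda_{i+1})) \simeq \Hom_\g(\g \otimes V(\lambda_{i+1}), V(\lambda_i)).\]
So it suffices to connect $\lambda$ to $\mu$ by a freely reversible chain of dominant weights in which, at each step, one neighbor appears as a summand of $\g$ tensored with the other.

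The construction rests on two elementary moves. First, $V(\nu + \theta) \subset \g \otimes V(\nu)$ for every $\nu \in P^+$: indeed $\nu + \theta$ is the highest weight of $V(\theta) \otimes V(\nu) = \g \otimes V(\nu)$, and $\nu + \theta \in P^+$ since $\theta$ is a dominant root. Second, $V(\nu - \alpha_i) \subset \g \otimes V(\nu)$ whenever $\nu, \nu - \alpha_i \in P^+$. For the latter I would apply Kostant's multiplicity formula: the identity element of the Weyl group contributes the weight multiplicity of $-\alpha_i$ in $\g$, which is $1$, while a short case analysis---using that the nonzero weights of $\g$ are exactly the roots, and that if $\alpha_k$ is orthogonal to $\alpha_i$ then $\alpha_k + \alpha_i$ is not a root---shows that every other term vanishes. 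Combined with the symmetry above, these two moves give free passage $\nu \leftrightarrow \nu + \theta$ and $\nu \leftrightarrow \nu \pm \alpha_i$ (the latter whenever both endpoints are dominant).

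Given these tools, the chain from $\lambda$ to $\mu$ is built in three parts: (i) iterate the $\theta$-move to pass from $\lambda$ to $\lambda + N\theta$; (ii) walk from $\lambda + N\theta$ to $\mu + N\theta$ by simple-root moves dictated by the decomposition $\lambda - \mu = \sum_i c_i \alpha_i$, $c_i \in \z$; (iii) iterate the $\theta$-move in reverse to pass from $\mu + N\theta$ to $\mu$. The \emph{main obstacle} is to keep the middle walk inside $P^+$: in types such as $A_n$ with $n \geq 3$, $\theta$ has zero coefficient on the interior fundamental weights, so $\lambda + N\theta$ need not lie deep in the dominant cone no matter how large $N$ is. I would handle this by scheduling the simple-root moves carefully---performing all additions before subtractions, and interspersing additional $\theta$-detours to raise coordinates when needed---so that every intermediate weight has nonnegative pairing with every $h_k$. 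Concatenating the three parts then yields the required chain.
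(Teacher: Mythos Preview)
The paper does not prove this lemma; it cites \cite{MR2078944}, Proposition~1.2. So there is no in-paper argument to compare against, and your proposal must stand on its own.

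Your symmetry observation and the two elementary moves are reasonable (the justification of the $\nu\leftrightarrow\nu-\alpha_i$ move via the multiplicity formula is a bit thin---you only discuss simple reflections, whereas the sum runs over all of $W$---but the claim itself is standard). The real problem is that the step you yourself flag as the \emph{main obstacle} is precisely the step you do not carry out. You assert that the middle walk can be kept inside $P^+$ by ``scheduling the simple-root moves carefully'' and ``interspersing additional $\theta$-detours to raise coordinates when needed,'' but you give no argument that such a schedule exists. Your own observation exposes the difficulty: in type $A_n$ with $n\geq 3$ one has $\langle h_k,\theta\rangle=0$ for every interior $k$, so $\theta$-moves never raise those coordinates, and it is not clear what a $\theta$-detour can accomplish for them. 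Likewise ``additions before subtractions'' is not obviously safe, since adding $\alpha_i$ lowers $\langle h_j,\cdot\rangle$ for each $j$ adjacent to $i$ and can itself push you out of $P^+$. The connectivity of each $Q$-coset of $P^+$ under the moves $\pm\theta$, $\pm\alpha_i$ is the entire content of the lemma; as written it is asserted rather than proved. To complete the argument you would need an explicit inductive or algorithmic scheme that, at every stage, exhibits a legal next move and verifies that the result is still dominant---and that is exactly what has been left undone.
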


\renewcommand{\proofname}{{\rm\textbf{Proof of Proposition \ref{prop:block}.}}}
\begin{proof}
The assertion of (i) immediately follows from Theorem \ref{thm:main}.

We prove (ii).
It suffices to show the assertion for the simple modules of the form $V_\m(\lambda)$ by Lemma~\ref{lem:tensor}.
By Proposition \ref{prop:single} and Lemma \ref{lem:link}, we reduce to claim that $\Der(A, A/\m) \neq 0$.
This is deduced from the following well known facts:
\[\Der(A, A/\m) \simeq \Hom_\c(\m/\m^2, \c)\]
and $\m/ \m^2 = 0$ if and only if $A = \c$.
\end{proof}

\end{document}